\newtheorem{thm}{Theorem}[section]
\newtheorem{lem}[thm]{Lemma}
\newtheorem{prop}[thm]{Proposition}
\newtheorem{rems}[thm]{Remarks}
\newtheorem{rem}[thm]{Remark}
\DeclareMathAlphabet{\mathpzc}{OT1}{pzc}{m}{it}
\numberwithin{equation}{section}
\newcommand{\Wqb}{W_{q,\mathcal{B}}}
\newcommand{\Wpb}{W_{p,\mathcal{B}}}
\newcommand{\Bqb}{B_{q,p;\mathcal{B}}}
\newcommand{\R}{\mathbb{R}}
\newcommand{\N}{\mathbb{N}}
\newcommand{\Ac}{\mathcal{A}}
\newcommand{\E}{\mathbb{E}}
\newcommand{\B}{\mathcal{B}}
\newcommand{\ml}{\mathcal{L}}
\newcommand{\Om}{\Omega}
\newcommand{\ve}{\varepsilon}
\newcommand{\rd}{\mathrm{d}}
\newcommand{\divv}{\mathrm{div}}
\newcommand{\bqn}{\begin{equation}}
\newcommand{\eqn}{\end{equation}}
\newcommand{\bqnn}{\begin{equation*}}
\newcommand{\eqnn}{\end{equation*}}
\newcommand{\bear}{\begin{eqnarray}} 
\newcommand{\eear}{\end{eqnarray}} 
\newcommand{\bean}{\begin{eqnarray*}} 
\newcommand{\eean}{\end{eqnarray*}} 
\newcommand{\bs}{\begin{split}}
\newcommand{\es}{\end{split}}
\newcommand{\dhr}{\mathrel{\lhook\joinrel\relbar\kern-.8ex\joinrel\lhook\joinrel\rightarrow}}
\title[Bifurcation in structured population models]
{Bifurcation of Positive Equilibria in Nonlinear Structured Population Models with Varying Mortality Rates}
\author[Ch. Walker]{Christoph Walker}
\address{Leibniz Universit\"at Hannover, Institut f\"ur Angewandte Mathematik, Welfengarten 1, D--30167 Hannover, Germany.}
\email{walker@ifam.uni-hannover.de}
\begin{document}

\begin{abstract}
A parameter-dependent model involving nonlinear diffusion for an age-structured population is studied. The parameter measures the intensity of the mortality. A bifurcation approach is used to establish existence of positive equilibrium solutions.
\end{abstract}

\keywords{Age structure, nonlinear diffusion, population model, bifurcation, maximal regularity.
\\
{\it Mathematics Subject Classifications (2000)}: 35K55, 35K90, 92D25.}

\maketitle

\section{Introduction}

Let $u=u(t,a,x)\ge 0$ be the distribution density at time $t\ge 0$ of individuals of a population structured by age $a\in J:=[0,a_m]$ and spatial position $x\in\Om$, where $a_m\in (0,\infty)$ denotes the maximal age and $\Om$ is a bounded and smooth domain in $\R^N$. Suppose that the individual's movement is governed by a nonlinear diffusion term $\divv_x(D(U(t,x),a)\nabla_xu)$ with dispersal speed $D(U,a)>0$ depending on age and on the total local population $$
U(t,x):=\int_0^{a_m}u(t,a,x)\rd a\ .
$$ 
Let $b=b(U,a)\ge 0$ and $\bar{\mu}=\bar{\mu}(U,a)\ge 0$ denote respectively the density dependent birth and death rate. Then a simple model describing the evolution of the population with initial distribution $u^0=u^0(a,x)\ge 0$ is
\begin{align*}
&\partial_t u+\partial_a u=\divv_x\big(D(U(t,x),a)\nabla_xu\big)-\bar{\mu}(U(t,x),a)u\ ,& t>0\, ,\ a\in J\, ,\ x\in\Om\ ,\\
&u(t,0,x)=\int_0^{a_m} b\big(U(t,x),a\big)u(a)\,\rd a\ ,& t>0\, , \ x\in\Om\ ,\\
&\delta u(t,a,x)+(1-\delta)\partial_\nu u(t,a,x)=0\ ,& t>0\, , \ a\in J\, ,\ x\in\partial\Om\ ,\\
&u(0,a,x)=u^0(a,x)\ ,&  a\in J\, , \ x\in\Om\ ,
\end{align*}
where either $\delta=1$ or $\delta=0$ corresponding to Dirichlet or Neumann boundary conditions. Models of this type have a long history and we refer to \cite{WebbSpringer} for a survey of structured population models.
The well-posedness of these equations and related population models involving nonlinear diffusion is investigated e.g. in \cite{WalkerDCDS}. Questions regarding the large time behavior are linked to equilibrium solutions. In this paper we thus shall focus on nontrivial nonnegative equilibrium solutions for such equations, that is, on time-independent solutions $u=u(a,x)\ge 0$ with $u\not\equiv 0$. 

Positive equilibrium solutions for age-structured population models without diffusion are studied e.g. in \cite{Cushing1,Cushing2,Cushing3} using bifurcation techniques or also in \cite{Pruess2} using fixed point theorems in conical shells. A bifurcation approach to age-structured population models with linear diffusion and linear birth but nonlinear death rates is used in \cite{DelgadoEtAl,DelgadoEtAl2}. For an approach to age-structured models including both nonlinear diffusion and nonlinear death and birth rates we refer to \cite{WalkerSIMA,WalkerJDE}, where local and global bifurcation, respectively, is shown for a bifurcation parameter measuring the intensity of the fertility similarly as in \cite{Cushing1,Cushing2,Cushing3}. The aim of this paper is to demonstrate that also the intensity of the mortality can be treated as bifurcation parameter in age-structured models with nonlinear diffusion. Moreover, as expected and opposed to the results of e.g. \cite{WalkerSIMA}, where the fertility intensity varies, in the present situation subcritical bifurcation occurs under realistic assumptions.\\

The approach we choose is based on introducing a parameter $\lambda$ measuring the intensity of the mortality without changing its structure; that is, we shall consider parameter-dependent death rates of the form \mbox{$\bar{\mu}=\lambda\mu(U,a)$} with $\mu=\mu(U,a)$ being a fixed reference function. Thus we are looking for solutions $u=u(a,x)$ to the parameter-dependent problem
\begin{align}
&\partial_a u=\divv_x\big(D(U(x),a)\nabla_xu\big)-\lambda\mu(U(x),a)u\ ,& a\in J\, ,\ x\in\Om\ ,\label{2a}\\
&u(0,x)=\int_0^{a_m} b\big(U(x),a)u(a)\,\rd a\ ,&  x\in\Om\ ,\label{2b}\\
&\delta u(a,x)+(1-\delta)\partial_\nu u(a,x)=0\ ,& a\in J\, ,\ x\in\partial\Om\ .\label{2c}
\end{align}
Clearly, $u\equiv 0$ is a solution to \eqref{2a}-\eqref{2c} for any value of $\lambda$. The main goal is then to establish existence of nontrivial solutions which are also nonnegative. Under suitable assumptions we shall prove that the theorem of Crandall-Rabinowitz \cite{CrandallRabinowitz} applies so that there is a unique value $\lambda_0>0$ for which a nontrivial branch $\{(\lambda,u_\lambda);\vert\lambda-\lambda_0\vert\ \text{small}\}$ bifurcates from the trivial branch $\{(\lambda,0);\lambda\in\R\}$
at the critical point $(\lambda_0,0)$ and that at least one part of the nontrivial branch near the critical point consists of nonnegative solutions. 

To be more precise, let $\sigma_1$ be the first eigenvalue of $-\Delta_x$ on $\Om$ subject to Dirichlet (if $\delta=1$) or Neumann (if $\delta=0$) boundary conditions, hence $\sigma_1>0$ in the first and $\sigma_1=0$ in the second case. Suppose that
 \bqn\label{3}
 \int_0^{a_m}b(0,a)e^{-\sigma_1\int_0^a D(0,r)\rd r }\,\rd a\, >\, 1\qquad\text{and}\qquad \mu(0,a)>0\ \text{for}\ a\ \text{near}\ 0\ .
 \eqn
Roughly speaking, the first assumption in \eqref{3} may be interpreted as that for a zero death rate, the population is (locally) increasing. Letting $\lambda_0>0$ be such that
$$
\int_0^{a_m}b(0,a)\, e^{-\lambda_0\int_0^a\mu(0,r)\rd r}\, e^{-\sigma_1\int_0^a D(0,r)\rd r}\,\rd a=1\ ,
$$
the following result on local bifurcation holds for equations \eqref{2a}-\eqref{2c}:

\begin{thm}\label{P}
Let $D\in C^{\infty,1}(\R\times J)$ with $D(z,a)\ge d_0>0$ for $z\in\R$ and $a\in J$. Further, let \mbox{$\mu, b\in C^{\infty,1}(\R\times J)$} be nonnegative and suppose \eqref{3}. Then $(\lambda_0,0)$ is a bifurcation point for \eqref{2a}-\eqref{2c}, that is, there is a unique local branch of nontrivial nonnegative solutions 
$$
(\lambda,u)\quad\text{in}\quad \R^+\times \big( C(J,C(\bar{\Om}))\cap C^{1}(\dot{J},C(\bar{\Om}))\cap C(\dot{J},C^2(\Om))\big)
$$ 
emanating from the critical point $(\lambda_0,0)$, where $\dot{J}:=J\setminus\{0\}$. In addition, if $\delta=0$ and
\bqn\label{4}
b(z,a)\le b(0,a)\ ,\quad \mu(z,a)\ge \mu(0,a)\ ,\quad z\ge 0\ ,\quad a\in J\ ,
\eqn
then bifurcation is subcritical, i.e. $\lambda\le \lambda_0$ for any nonnegative solution $(\lambda,u)$.
\end{thm}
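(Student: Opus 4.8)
The plan is to recast \eqref{2a}-\eqref{2c} as an abstract bifurcation equation $F(\lambda,u)=0$ with the trivial branch $F(\lambda,0)=0$ and to apply the theorem of Crandall--Rabinowitz. I would work in Banach spaces over $J\times\Om$ built on a realization of the spatial operator subject to \eqref{2c} (for instance on $L_p(\Om)$) together with the corresponding trace space at $a=0$, using that for $w$ in the solution space the total population $U_w(x):=\int_0^{a_m}w(a,x)\,\rd a$ is Hölder continuous on $\bar\Om$ by maximal regularity, so that the $a$-family $\Ac_w(a):=-\divv_x\!\big(D(U_w,a)\nabla_x\cdot\big)+\lambda\mu(U_w,a)$ generates a positive parabolic evolution operator $\Pi_{w,\lambda}(a,\sigma)$, $0\le\sigma\le a\le a_m$. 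Then $u$ solves \eqref{2a}-\eqref{2c} exactly when
\bqnn
F(\lambda,u):=u-\Pi_{u,\lambda}(\cdot,0)\Big(\int_0^{a_m}b\big(U_u,s\big)\,u(s)\,\rd s\Big)=0\ .
\eqnn
Using the $C^{\infty,1}$-regularity of $D,\mu,b$, maximal regularity, and the smooth dependence of $\Pi_{w,\lambda}$ on its coefficients, $F$ should be smooth near $(\lambda_0,0)$; and the parabolic smoothing of $\Pi_{\cdot,\lambda}$ together with the age-averaging in the birth term should show that $D_uF(\lambda,0)$ differs from the identity by a compact operator, hence is Fredholm of index zero.

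The core of the argument is the analysis of $D_uF(\lambda_0,0)$. Since the coupling through $U_w$ enters only quadratically at $u=0$, the equation $D_uF(\lambda_0,0)v=0$ reduces to the frozen-coefficient problem $\partial_av=\divv_x(D(0,a)\nabla_xv)-\lambda_0\mu(0,a)v$, $v(0,\cdot)=\int_0^{a_m}b(0,a)v(a,\cdot)\,\rd a$, subject to \eqref{2c}. Expanding $v(a,x)=\sum_kv_k(a)\phi_k(x)$ in the eigenfunctions $\phi_k$ of $-\Delta_x$ (eigenvalues $0\le\sigma_1<\sigma_2\le\cdots$, $\phi_1>0$) gives $v_k(a)=v_k(0)\exp(-\sigma_k\!\int_0^aD(0,r)\,\rd r-\lambda_0\!\int_0^a\mu(0,r)\,\rd r)$, and the birth condition forces $v_k(0)\big(1-\chi_k(\lambda_0)\big)=0$ with $\chi_k(\lambda):=\int_0^{a_m}b(0,a)\exp(-\sigma_k\!\int_0^aD(0,r)\,\rd r-\lambda\!\int_0^a\mu(0,r)\,\rd r)\,\rd a$. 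Since $\chi_1(\lambda_0)=1$ by the choice of $\lambda_0$, while $\chi_k(\lambda_0)<1$ for $k\ge2$ (because $\sigma_k>\sigma_1$, $D(0,\cdot)\ge d_0$, and $b(0,\cdot)\not\equiv0$), one obtains $\kk D_uF(\lambda_0,0)=\spann\{u_0\}$ with $u_0(a,x):=\Psi_{\lambda_0}(a)\phi_1(x)>0$, $\Psi_{\lambda_0}(a):=\exp(-\sigma_1\!\int_0^aD(0,r)\,\rd r-\lambda_0\!\int_0^a\mu(0,r)\,\rd r)$, and $\codim\im D_uF(\lambda_0,0)=1$. (That $\lambda_0$ exists and is unique is seen here too: $\chi_1$ is strictly decreasing since $\mu(0,\cdot)>0$ near $a=0$, with $\chi_1(0)>1$ by the first part of \eqref{3}.)

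For the transversality condition I would use that, the coefficients $D(0,\cdot),\mu(0,\cdot)$ being $x$-independent, the subspace $V$ of profiles $\psi(a)\phi_1(x)$ is invariant under $D_uF(\lambda_0,0)$, which acts on it as $\psi\mapsto\psi-\big(\int_0^{a_m}b(0,a)\psi(a)\,\rd a\big)\Psi_{\lambda_0}$, so that $D_uF(\lambda_0,0)(V)=\{\psi\phi_1:\int_0^{a_m}b(0,a)\psi(a)\,\rd a=0\}$. The variation-of-constants formula gives that $D_\lambda D_uF(\lambda_0,0)u_0$ is the profile $a\mapsto\big(\int_0^a\mu(0,r)\,\rd r\big)\Psi_{\lambda_0}(a)\,\phi_1\in V$, and since $D_uF(\lambda_0,0)$ commutes with the projection onto $V$ while $\int_0^{a_m}b(0,a)\big(\int_0^a\mu(0,r)\,\rd r\big)\Psi_{\lambda_0}(a)\,\rd a>0$ (by $\mu(0,\cdot)>0$ near $0$ and $b(0,\cdot)\not\equiv0$), it follows that $D_\lambda D_uF(\lambda_0,0)u_0\notin\im D_uF(\lambda_0,0)$. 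Crandall--Rabinowitz then yields a $C^1$-curve $s\mapsto(\lambda(s),u(s))$, $|s|<\ve$, of nontrivial solutions with $(\lambda(0),u(0))=(\lambda_0,0)$, $u(s)=s\,u_0+o(s)$ and $\lambda(s)>0$ for small $s$, exhausting the nontrivial solutions near $(\lambda_0,0)$; as $u_0$ lies in the interior of the positive cone of the ambient ordered space (by $\phi_1>0$ and the strong maximum principle), $u(s)\ge0$ for small $s>0$, giving the unique local branch of nontrivial nonnegative solutions, whose stated regularity is interior parabolic regularity for \eqref{2a}, the singularity at $a=0$ caused only by the incompatibility of $u(0,\cdot)$ with \eqref{2c}.

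For the subcriticality claim, let $\delta=0$, assume \eqref{4}, and take any nonnegative nontrivial solution $(\lambda,u)$; one may assume $\lambda>0$, else $\lambda\le0<\lambda_0$. Set $N(a):=\int_\Om u(a,x)\,\rd x$. Integrating \eqref{2a} over $\Om$, the divergence term vanishes because $\partial_\nu u=0$ on $\partial\Om$, and $\mu(U,a)\ge\mu(0,a)$ gives $N'(a)\le-\lambda\mu(0,a)N(a)$, whence $N(a)\le N(0)\exp(-\lambda\!\int_0^a\mu(0,r)\,\rd r)$ by Gronwall; moreover $N(0)>0$, since $N(0)=0$ would force $u(0,\cdot)\equiv0$ and then $u\equiv0$ by uniqueness for \eqref{2a} with the (now fixed, Hölder) coefficient $U$. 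Inserting this into \eqref{2b} and using $b(U,a)\le b(0,a)$,
\bqnn
N(0)=\int_0^{a_m}\!\!\int_\Om b(U,a)\,u\,\rd x\,\rd a\le\int_0^{a_m}b(0,a)\,N(a)\,\rd a\le N(0)\int_0^{a_m}b(0,a)\,e^{-\lambda\int_0^a\mu(0,r)\rd r}\,\rd a\ ,
\eqnn
hence (using $\sigma_1=0$ for $\delta=0$) $\chi_1(\lambda)\ge1=\chi_1(\lambda_0)$, and the strict monotonicity of $\chi_1$ forces $\lambda\le\lambda_0$. I expect the main obstacle to be the functional-analytic groundwork of the first step — choosing spaces in which $F$ is simultaneously smooth and a compact perturbation of the identity while the nonlocal-in-age dependence through $U_u$ stays under control (the maximal-regularity machinery of \cite{WalkerSIMA,WalkerJDE} being the natural tool) — together with, inside the Crandall--Rabinowitz verification, pinning down the sign in the transversality relation, which is exactly where $\mu(0,\cdot)>0$ near $a=0$ enters; the subcriticality estimate is by comparison elementary.
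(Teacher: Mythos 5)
Your overall strategy is the right one and, for the subcriticality claim, your argument is literally the paper's: integrate \eqref{2a} over $\Om$, kill the divergence term by the Neumann condition, use \eqref{4} and Gronwall to get $z(a)\le z(0)e^{-\lambda\int_0^a\mu(0,r)\rd r}$, feed this into \eqref{2b} to obtain $k(\lambda)\ge 1$ and conclude from the strict monotonicity of $k$ (your useful extra observation that $z(0)>0$ is indeed needed to cancel it). For the bifurcation part you reach the same kernel element $e^{-\lambda_0\int_0^a\mu(0,r)\rd r}e^{-\sigma_1\int_0^aD(0,r)\rd r}\varphi_1$ and the same transversality integral $\int_0^{a_m}b_0(a)\big(\int_0^a\mu(0,r)\rd r\big)\Psi_{\lambda_0}(a)\rd a>0$ as the paper's Lemmas~\ref{D} and~\ref{E}, but by a genuinely different route: you diagonalize the frozen problem in the eigenbasis of $-\Delta_x$ (legitimate here because $D(0,\cdot)$ and $\mu(0,\cdot)$ are $x$-independent at $u=0$), whereas the paper works with the abstract operator $Q_\lambda$ and the Krein--Rutman theorem, which is what lets its Theorem~\ref{F} cover $x$-dependent and non-divergence-form operators; for Theorem~\ref{P} itself your explicit computation buys simplicity at no cost. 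The second structural difference is your choice of $F(\lambda,u)=u-\Pi_{u,\lambda}(\cdot,0)\ell(u)$ as a fixed-point equation in the solution space, versus the paper's two-component map $(\partial_au+A(u)u+\lambda hu-g,\ \gamma_0u-\ell(u))$ into $\E_0\times E_\varsigma$. Both give a Fredholm operator of index $0$, but the paper's choice deliberately avoids having to differentiate the evolution operator $\Pi_{u,\lambda}$ with respect to $u$: all the smoothness is then carried by the superposition operators $u\mapsto D(U_u,\cdot),\mu(U_u,\cdot),b(U_u,\cdot)$, which is exactly the content of the appendix (Proposition~\ref{dix}). Since you yourself identify this functional-analytic groundwork as the main obstacle, be aware that your formulation makes it strictly harder, not easier.

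Two smaller points to make precise. First, ``$u_0$ lies in the interior of the positive cone of the ambient ordered space'' is false for $\E_1=L_p(J,\Wqb^2)\cap W_p^1(J,L_q)$, whose cone has empty interior, and false for $C(J\times\bar\Om)$ in the Dirichlet case; the correct statement is that $u_0(J)$ is a compact subset of $\mathrm{int}(E_\varsigma^+)$ (nonempty by $E_\varsigma\hookrightarrow C^1(\bar\Om)$ and Hopf's lemma), so $u_0$ is interior in $C(J,E_\varsigma)$ with cone $C(J,E_\varsigma^+)$, and $\E_1\hookrightarrow C(J,E_\varsigma)$ then yields $u(s)\ge 0$ for small $s>0$. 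This works because $J$ is compact; the paper instead propagates the interiority of the trace $\gamma_0u(\ve)$ forward by positivity of the evolution operator, which is the argument that survives when $a_m=\infty$. Second, for the existence of $\lambda_0$ you also need $\chi_1(\lambda)\to 0$ as $\lambda\to\infty$, which follows from $\mu(0,\cdot)>0$ near $a=0$ by dominated convergence; strict monotonicity and $\chi_1(0)>1$ alone do not suffice.
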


Assumption \eqref{4} is a common modeling assumption stating that effects of population densities do neither increase fertility nor decrease mortality. The result thus shows that lowering the intensity of mortality below a critical value leads to other equilibrium solutions than the trivial one. We also refer to Section~\ref{sec3} for an example where subcritical bifurcation occurs when $\delta=1$ and \eqref{4} holds.

We shall emphasize that Theorem~\ref{P} is merely a consequence of the considerably more general Theorem~\ref{F} that includes general nonlinear elliptic diffusion operators not necessarily in divergence form (and also less regular data). The proof of Theorem~\ref{P} will be given as an application of Theorem~\ref{F} in Section~\ref{sec3}.\\
To cover a great variety of applications we thus shall consider \eqref{2a}-\eqref{2c} as an abstract equation of the form
\begin{align}
& \partial_au \, +\,     A(u,a)\,u\,=\,-\lambda h(a)u\,+\,g(\lambda,u,a)u\ , \quad  a\in J\ ,\label{5a}\\ 
&u(0)\, =\, \int_0^{a_m}b(u,a)\, u(a)\, \rd a\ ,& \label{5b}
\end{align}
in an ordered Banach space $E_0$ with positive cone $E_0^+$ for the unknown function $u:J\rightarrow E_0^+$. Here, $A(u,a)$ defines for fixed $(u,a)\in E_0\times J$ a bounded linear operator from a subspace $E_1$ of $E_0$ into $E_0$. Problem \eqref{2a}-\eqref{2c} then fits into this abstract framework by choosing $$E_0:=L_q(\Om)\ ,\quad E_1:=\{v\in W_q^2(\Om);\delta v+(1-\delta)\partial_\nu v=0 \ \text{on}\ \partial\Omega\}$$ for some $q\in (1,\infty)$ (where boundary values are interpreted in the sense of traces) and letting 
$$
A(u,a)w:=-\divv_x\big(D(U,a)\nabla_x w\big)\ ,\quad h(a):=\mu(0,a)\ ,\quad\text{and}\quad g(\lambda,u,a):=\lambda (\mu(U,a)-\mu(0,a))\ .
$$ 
In Section \ref{sec2} we consider the abstract equations \eqref{5a}, \eqref{5b} and prove under suitable assumptions in Theorem~\ref{F} a local bifurcation result. In Section 3 we give applications of Theorem~\ref{F} and prove in particular Theorem~\ref{P}. Finally, the appended Section~\ref{appendix} contains a result on the differentiability of superposition operators in Sobolev-Slobodeckii spaces used for the applications in Section~\ref{sec3} that we were unable to find in the literature in this form.

\section{The abstract bifurcation result}\label{sec2}

Studying the nonlinear problem \eqref{5a}, \eqref{5b} demands an investigation of its linearization around $u=0$. We first state the precise assumptions required.

\subsection{Assumptions}

Given Banach spaces $E$ and $F$ we let $\mathcal{L}(E,F)$ denote the space of all bounded and linear operators from $E$ into $F$, and $\mathcal{L}(E):=\mathcal{L}(E,E)$. We write $\mathcal{L}is(E,F)$ for the subspace of $\mathcal{L}(E,F)$ consisting of all topological isomorphism and $\mathcal{K}(E,F)$ for the subspace of compact operators. 

For the remainder of this section let $J:=(0,a_m)$ with $a_m\in (0,\infty]$ and note that $J$ may be bounded or unbounded. Moreover, we fix an ordered Banach space $E_0$ with positive cone $E_0^+$ and a dense subspace $E_1$ thereof which is also supposed to be compactly embedded in $E_0$. This latter property we express by writing $E_1\dhr E_0$. Given $\theta\in[0,1]$ and an admissible interpolation functor $(\cdot,\cdot)_\theta$ we equip the interpolation space $E_\theta:=(E_0,E_1)_\theta$ with the order induced by the positive cone $E_\theta^+:=E_\theta\cap E_0^+$. Note that \mbox{$E_\vartheta\dhr E_\theta$} for \mbox{$0\le \theta<\vartheta\le 1$} according to \cite[I.Thm.2.11.1]{LQPP}. In particular, we fix $p\in (1,\infty)$ and set $E_\varsigma:=(E_0,E_1)_{1-1/p,p}$; that is, $E_\varsigma$ is the real interpolation space between $E_0$ and $E_1$ of exponent $\varsigma:=1-1/p$. We then assume that
\bqn\label{6}
\mathrm{int}(E_\varsigma^+)\not=\emptyset\ ,
\eqn
where $\mathrm{int}(E_\varsigma^+)$ denotes the topological interior of the cone $E_\varsigma^+$. We set
$$
\E_0:=L_p(J,E_0)\qquad\text{and}\qquad \E_1:=L_p(J,E_1)\cap W_p^1(J,E_0)
$$
and recall that $\E_1\hookrightarrow BUC(J,E_\varsigma)$ (see \cite{LQPP}). Thus, the trace operator $\gamma_0u:=u(0)$ for $u\in\E_1$ is a well-defined operator $\gamma_0\in\mathcal{L}(\E_1,E_\varsigma)$. We also set $\E_1^+:=\E_1\cap L_p^+(J,E_0)$. Suppose that
\bqn\label{7}
\begin{aligned}   & F\ \text{is a Banach space ordered by a positive cone}\ F^+\ \text{with}\ F\cdot E_1\hookrightarrow E_\varsigma\ \text{and}\ F^+\cdot E_0^+\hookrightarrow E_0^+\ ,
\end{aligned}
\eqn
where e.g. $F\cdot E_1\hookrightarrow E_\varsigma$ means a continuous bilinear mapping (i.e. a {\it multiplication}) $F\times E_1\rightarrow E_\varsigma$, $(f,e)\mapsto f\cdot e$. Let $\Sigma$ be a fixed ball in $\E_1$ centered at 0 of some positive radius and assume that
\bqn\label{8}
g\in C^1(\R^+\times\Sigma,L_\infty(J,F))\ \text{with}\ g(\lambda,0)\equiv 0\ \text{for}\ \lambda\in\R^+\ 
\eqn
and
\bqn\label{9}
h\in L_1^+(J,\R)\cap L_\infty(J,\R)\ \text{with}\ h>0\ \text{near}\ a=0\ .
\eqn
Observe that \eqref{8} guarantees that we may interpret $g(\lambda,u)$ as an element of $L_\infty(J,\mathcal{L}(E_1,E_0))$ for $(\lambda,u)$ in $\R^+\times\Sigma$ fixed. Suppose then that
\bqn\label{10}
A\in C^1\big(\Sigma,L_\infty(J,\mathcal{L}(E_1,E_0))\big)
\eqn
is such that 
\bqn\label{11}
\begin{aligned}   & A(u)+\lambda h-g(\lambda,u)\in L_{\infty}(J,\ml(E_1,E_0))\ \text{generates a positive parabolic}\\
&\text{evolution operator}\ \Pi_{(\lambda,u)}(a,\sigma), 0\le\sigma\le a<a_m,\ \text{on}\ E_0\ \text{with regularity subspace}\\\
& E_1\ \text{for each}\ (\lambda,u)\in \R^+\times\Sigma\ .
\end{aligned}
\eqn
We refer to \cite{LQPP} for a definition and properties of parabolic evolution operators. Note that, due to  \eqref{8} and \eqref{9}, the parabolic evolution operator $\Pi_0:=\Pi_{(0,0)}$ is simply generated by $A(0)$ and
\bqn\label{11A}
\Pi_{(\lambda,0)}(a,\sigma)=e^{-\lambda\int_\sigma^a h(r)\rd r}\, \Pi_0(a,\sigma)\ ,\quad 0\le \sigma\le a<a_m\ .
\eqn
We further suppose that
\bqn\label{11B}
\text{there are}\ \zeta\in\R, \rho,\omega>0, \kappa\ge 1\ \text{such that}\ \zeta+ A(0)\in C^\rho(J,\mathcal{H}(E_1,E_0;\kappa,\omega))
\eqn
and that
\bqn\label{11C}
\begin{aligned}   & A(0)+\lambda h\in L_{\infty}(J,\ml(E_1,E_0))\ \text{possesses maximal}\ L_p\text{-regularity on}\ J\ ,\\
&\text{that is,}\ \big(\partial_a+A(0)+\lambda h,\gamma_0\big)\in\mathcal{L}is(\E_1,\E_0\times E_\varsigma),\ \text{for each}\ \lambda>0\ .
\end{aligned}
\eqn
We refer again to \cite{LQPP} for a definition of the space $\mathcal{H}(E_1,E_0;\kappa,\omega)$ and details about operators having maximal $L_p$-regularity. We agree upon the notation $A(u,a):=A(u)(a)$ and e.g. $(hu)(a):=h(a)u(a)$ for $a\in J$ and $u\in\E_1$. We point out that, owing to \eqref{11}, \eqref{11C}, and \cite[III.Prop.1.3.1]{LQPP}, the linear problem
$$
\partial_au \, +\,  \big(A(0,a)+\lambda h(a)\big)\,u\, =\, f(a)\ ,\quad a\in J\ ,\qquad
u(0)\, =\, u^0
$$
admits for each datum $(f,u^0)\in \E_0\times E_{\varsigma}$ and $\lambda> 0$ a unique solution $u\in \E_1$ given by
\bqn\label{u1}
u(a)=\Pi_{(\lambda,0)}(a,0)u^0+\int_0^a \Pi_{(\lambda,0)}(a,\sigma)f(\sigma)\,\rd \sigma\ ,\quad a\in J\ ,
\eqn
satisfying for some $c_0=c_0(\lambda)>0$
\bqn\label{u2}
\| u\|_{\E_1}\le c_0\big(\|f\|_{\E_0}+\|u^0\|_{E_{\varsigma}}\big)\ .
\eqn
Moreover, invoking \cite[II.Lem.5.1.3]{LQPP} it follows from \eqref{11B} that there are $M_0\ge1$ and $\omega_0\in\R$ such that for $0\le\gamma<\beta<\alpha\le 1$
\bqn\label{12}
\|\Pi_0(a,\sigma)\|_{\mathcal{L}(E_\gamma)}+(a-\sigma)^{\alpha-\gamma}\|\Pi_0(a,\sigma)\|_{\mathcal{L}(E_\beta,E_\alpha)}\le M_0e^{\omega_0 (a-\sigma)}\ ,\quad 0\le\sigma<a<a_m\ .
\eqn
We also assume that
\bqn\label{13}
\Pi_0(a,0)\ \text{is strongly positive for each}\ a\in (0,a_m)\ ,
\eqn
that is, $\Pi_0(a,0)\phi\in\mathrm{int}(E_\varsigma^+)$ for $\phi\in E_\varsigma^+\setminus\{0\}$ and $a\in (0,a_m)$, and that
\bqn\label{14}
\Pi_0(a,0)\Pi_0(\sigma,0)=\Pi_0(\sigma,0)\Pi_0(a,0)\ ,\quad 0\le a,\sigma<a_m\ .
\eqn
The latter condition means that the operators $\{A(0,a);a\in J\}$ commute with each other. Finally, we assume that
\bqn\label{15}
\begin{aligned}   & b\in C^1(\Sigma,L_{p'}^+(J,F))\ \text{with}\ 0\not\equiv b_0:=b(0,\cdot)\in L_{p'}^+(J,\R)\\
&\text{and} \ \int_0^{a_m}b_0(a)e^{\omega_0a}\,\rd a<\infty\ ´,
\end{aligned}
\eqn
where $p'$ is the dual exponent of $p$, i.e. $1/p+1/p'=1$. The last condition in \eqref{15} is obviously superfluous if $a_m<\infty$ or $\omega_0<0$.

For the remainder of this section we assume that conditions \eqref{6}-\eqref{11}, \eqref{11B}, \eqref{11C}, \eqref{13}-\eqref{15} hold and refer to Section \ref{sec3} for examples where these conditions are met. In particular, they hold in case of Theorem \ref{P}.\\

\subsection{The linear problem}

We begin by investigating the linearization of \eqref{5a}, \eqref{5b} around $u=0$, that is, by investigating the problem
\begin{align}
& \partial_au \, +\, \big(A(0,a)+\lambda h(a)\big)u\,=\,0\ , \quad  a\in J\ ,\label{16a}\\ 
&u(0)\, =\, \int_0^{a_m}b_0(a)\, u(a)\, \rd a\ \label{16b}\ .
\end{align}
It readily follows from the previous observations that any solution $(\lambda,u)\in \R^+\times\E_1$ of \eqref{16a}, \eqref{16b} is of the form
\bqn\label{16B}
u(a)=e^{-\lambda\int_0^a h(r)\rd r}\,\Pi_0(a,0)\,u(0)\ ,\quad a\in J\ ,\qquad u(0)=Q_\lambda u(0)\ ,
\eqn
where the operator $Q_\lambda$ is given by
$$
Q_\lambda:=\int_0^{a_m}b_0(a)\, e^{-\lambda\int_0^a h(r)\rd r}\, \Pi_0(a,0)\,\rd a\ 
$$
and enjoys the following properties:

\begin{lem}\label{H}
For any $\lambda\ge 0$, $Q_\lambda\in\mathcal{K}(E_\varsigma)$ is strongly positive. Hence, the spectral radius $r(Q_\lambda)>0$ of $Q_\lambda$ is a simple eigenvalue of $Q_\lambda$ and of its dual operator $Q_\lambda'$ with eigenvector $B_\lambda\in\mathrm{int}(E_\varsigma^+)$ and strictly positive eigenfunctional $B_\lambda'\in E_\varsigma'$, respectively. Moreover, $r(Q_\lambda)$ is the only eigenvalue with a positive eigenvector.
\end{lem}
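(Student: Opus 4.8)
The plan is to reduce the lemma to the Krein--Rutman theorem in its strong version for compact strongly positive operators on an ordered Banach space whose positive cone has nonempty interior. Thus the proof consists of two verifications, namely that $Q_\lambda\in\mathcal{K}(E_\varsigma)$ and that $Q_\lambda$ maps $E_\varsigma^+\setminus\{0\}$ into $\mathrm{int}(E_\varsigma^+)$; once these hold, the conclusions that $r(Q_\lambda)>0$ is a simple eigenvalue of $Q_\lambda$ and of $Q_\lambda'$, with eigenvector in $\mathrm{int}(E_\varsigma^+)$ and strictly positive eigenfunctional, and that $r(Q_\lambda)$ is the only eigenvalue admitting a positive eigenvector, all follow from that theorem. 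As preliminaries I would fix $e_0\in\mathrm{int}(E_\varsigma^+)$ (nonempty by \eqref{6}) and note that $0<e^{-\lambda\int_0^a h(r)\,\rd r}\le 1$ since $h\ge 0$ and $\lambda\ge 0$; combined with the bound $\|\Pi_0(a,0)\|_{\mathcal{L}(E_\varsigma)}\le M_0 e^{\omega_0 a}$ from \eqref{12} and the integrability in \eqref{15}, this already shows that the Bochner integral defining $Q_\lambda$ converges in $\mathcal{L}(E_\varsigma)$ and, by \eqref{11}, that $Q_\lambda E_\varsigma^+\subseteq E_\varsigma^+$.

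For compactness I would not try to land $Q_\lambda$ in $E_1$ --- the smoothing bound for $\Pi_0(a,0)\colon E_\varsigma\to E_1$ fails to be integrable up to $a=0$ against $b_0\in L_{p'}(J)$ --- but in a slightly higher interpolation space. Since $\varsigma=1-1/p<1$, one can choose exponents $0\le\gamma<\varsigma<\theta\le 1$ with $\theta-\gamma<1/p$; then \eqref{12} yields $\|\Pi_0(a,0)\|_{\mathcal{L}(E_\varsigma,E_\theta)}\le M_0 e^{\omega_0 a}a^{-(\theta-\gamma)}$, and integrating against $b_0(a)e^{-\lambda\int_0^a h}$ gives $Q_\lambda\in\mathcal{L}(E_\varsigma,E_\theta)$, because the kernel $b_0(a)e^{\omega_0 a}a^{-(\theta-\gamma)}$ is integrable on $J$: near $a=0$ by H\"older's inequality (using $(\theta-\gamma)p<1$ and $b_0\in L_{p'}$) and for large $a$ by \eqref{15}. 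Since $\theta>\varsigma$ forces $E_\theta\dhr E_\varsigma$, the operator $Q_\lambda$ is then compact on $E_\varsigma$.

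The delicate step --- and the one I expect to be the main obstacle --- is strong positivity. For $\phi\in E_\varsigma^+\setminus\{0\}$ the integrand $b_0(a)e^{-\lambda\int_0^a h}\Pi_0(a,0)\phi$ lies in $\mathrm{int}(E_\varsigma^+)$ for each $a\in(0,a_m)$ by \eqref{13}, but concluding that the integral lies in $\mathrm{int}(E_\varsigma^+)$ requires a uniform order bound rather than mere pointwise membership. I would first record the elementary fact that $\chi\in\mathrm{int}(E_\varsigma^+)$ if and only if $\chi\ge\beta e_0$ for some $\beta>0$, then use $b_0\not\equiv 0$ to fix $0<a_1<a_2<a_m$ with $c:=\int_{a_1}^{a_2}b_0(a)e^{-\lambda\int_0^a h}\,\rd a>0$. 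For this fixed $\phi$, the set $\{\Pi_0(a,0)\phi;\,a\in[a_1,a_2]\}$ is compact in $E_\varsigma$ (continuity of $a\mapsto\Pi_0(a,0)\phi$ on $(0,a_m)$ is part of the regularity of $\Pi_0$) and is contained in $\mathrm{int}(E_\varsigma^+)$ by \eqref{13}; a routine covering argument based on the above equivalence then produces $\beta>0$ with $\Pi_0(a,0)\phi\ge\beta e_0$ for all $a\in[a_1,a_2]$, so that, using positivity of $\Pi_0$ and closedness of $E_\varsigma^+$,
\[
Q_\lambda\phi\ \ge\ \int_{a_1}^{a_2}b_0(a)\,e^{-\lambda\int_0^a h(r)\,\rd r}\,\Pi_0(a,0)\phi\,\rd a\ \ge\ \beta c\,e_0\ \in\ \mathrm{int}(E_\varsigma^+)\ .
\]
Note that \eqref{14} plays no role here; it is used elsewhere in the analysis.

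Finally, taking $\phi=e_0$ above gives $Q_\lambda e_0\ge\delta e_0$ for some $\delta>0$, which in particular forces $r(Q_\lambda)>0$; since $Q_\lambda$ is compact and strongly positive and the cone $E_\varsigma^+$ has nonempty interior, the strong Krein--Rutman theorem now yields exactly the assertions of the lemma. In summary, the only genuinely nontrivial point is the uniformity argument in the strong-positivity step; the remainder is bookkeeping with the evolution-operator estimates \eqref{11}--\eqref{15}.
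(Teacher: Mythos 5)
Your proof is correct, and its overall architecture (compactness of $Q_\lambda$, strong positivity, then the strong form of the Krein--Rutman theorem) coincides with the paper's; the compactness step is essentially identical to the paper's, landing $Q_\lambda$ in an interpolation space $E_\vartheta$ with $\vartheta-\varsigma<1/p$ via \eqref{12}, H\"older against $b_0\in L_{p'}$, and $E_\vartheta\dhr E_\varsigma$. Where you genuinely diverge is the strong-positivity step, which is also the crux of the lemma. You argue on the primal side: using the equivalence $\chi\in\mathrm{int}(E_\varsigma^+)\Leftrightarrow\chi\ge\beta e_0$ for some $\beta>0$, compactness of $\{\Pi_0(a,0)\phi\,;\,a\in[a_1,a_2]\}$ in $E_\varsigma$, and a finite covering, you obtain the uniform order bound $\Pi_0(a,0)\phi\ge\beta e_0$ on $[a_1,a_2]$ and hence $Q_\lambda\phi\ge\beta c\,e_0\in\mathrm{int}(E_\varsigma^+)$. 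The paper instead dualizes: for every nontrivial positive functional $f'\in E_\varsigma'$ it shows $\langle f',\Pi_0(a,0)\phi\rangle_{E_\varsigma}>0$ for all $a\in(0,a_m)$ (from \eqref{13} and \cite[Prop.A.2.7]{ClementEtAl}), hence $\langle f',Q_\lambda\phi\rangle_{E_\varsigma}>0$ by \eqref{15}; thus $Q_\lambda\phi$ is a quasi-interior point of $E_\varsigma^+$, and \cite[Prop.A.2.10]{ClementEtAl} together with \eqref{6} upgrades quasi-interior to interior. The duality route is shorter and needs only pointwise positivity of a scalar integrand plus measurability --- no continuity of $a\mapsto\Pi_0(a,0)\phi$ in $E_\varsigma$ and no covering argument --- whereas your route is more elementary and self-contained, avoids the quasi-interior/interior equivalence, and yields the quantitative minorant $Q_\lambda\phi\ge\beta c\,e_0$ as a by-product. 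Both arguments use \eqref{6} in an essential way, and you are right that \eqref{14} is not needed here.
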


\begin{proof}
Let $\theta\in (0,1/p)$ and set $\vartheta:=\theta+\varsigma>\varsigma$. Then we derive from \eqref{12} and \eqref{15} that $Q_\lambda\in\mathcal{L}(E_\varsigma,E_\vartheta)$ for $\lambda\ge 0$ and thus, since $E_\vartheta\dhr E_\varsigma$ by \cite[I.Thm.2.11.1]{LQPP}, we have 
$Q_\lambda\in\mathcal{K}(E_\varsigma)$, $\lambda\ge 0$. Hence the assertion follows from the Krein-Rutman theorem (e.g., see \cite[Thm.12.3]{DanersKochMedina}) and assumption \eqref{6} provided we can show that $Q_\lambda\in\mathcal{K}(E_\varsigma)$ is strongly positive. To fill this gap let $f'$ be any nontrivial element of the dual space $E_\varsigma'$ of $E_\varsigma$ with $\langle f',\phi\rangle_{E_\varsigma}\ge 0$ for $\phi\in E_\varsigma^+$. Let $\varphi\in E_\varsigma^+\setminus\{0\}$. Then it follows from \cite[Prop.A.2.7, Prop.A.2.10]{ClementEtAl} and \eqref{6} that \mbox{$\langle f',\Pi_0(a,0)\varphi\rangle_{E_\varsigma}> 0$} for $a\in (0,a_m)$ since $\Pi_0(a,0)\varphi\in\mathrm{int}(E_\varsigma^+)$ by \eqref{13}, and thus
$$
\langle f',Q_\lambda\varphi\rangle_{E_\varsigma}=\int_0^{a_m}b_0(a)\, e^{-\lambda\int_0^a h(r)\rd r}\, \langle f',\Pi_0(a,0)\varphi\rangle_{E_\varsigma}\,\rd a\, >\, 0
$$
owing to \eqref{15}. Hence $Q_\lambda\varphi$ is an interior point of $ E_\varsigma^+$ again due to \cite[Prop.A.2.7, Prop.A.2.10]{ClementEtAl} and assumption \eqref{6}. This yields the strong positivity of $Q_\lambda$.
\end{proof}

We assume in the sequel that
\bqn\label{18}
 r(Q_{0}) >1\ .
\eqn
Observe that \eqref{16B} implies that $u(0)$ is (if nonzero) an eigenvector of $Q_\lambda$ to the eigenvalue 1. If $u$ is nonnegative, i.e. $u\in\E_1^+$, then necessarily $u(0)\in E_\varsigma^+$ and so $r(Q_\lambda)=1$ by the previous lemma. The next lemma shows that $r(Q_\lambda)$ is strictly decreasing in $\lambda$. Hence, if \eqref{18} does not hold, there is no admissible (i.e. positive) value of $\lambda$ for which the linearized problem \eqref{16a}, \eqref{16b} admits a nonnegative nontrivial solution. The interpretation of the operator $Q_\lambda$ is that it contains information about the spatial distribution of the expected number of newborns that a population produces when the birth and death processes are described by $b(0,\cdot)$ and $\lambda h=\lambda \mu(0,\cdot)$, respectively, and spatial movement is governed by $A(0,\cdot)$. Hence, at equilibrium these processes yield exact replacement. Roughly speaking, assumption \eqref{18} may be interpreted as that the population subject to birth processes and spatial dispersal increases locally if $\lambda=0$, that is, if no deaths occur.

Under assumption \eqref{18}, the following lemma guarantees the existence of a unique value $\lambda_0>0$ with $r(Q_{\lambda_0})=1$.\\

The following auxiliary result uses the ideas of \cite{DelgadoEtAl2}:

\begin{lem}\label{C}
The mapping $[\lambda\mapsto r(Q_\lambda)]: [0,\infty)\rightarrow (0,\infty)$ is continuous, strictly decreasing, and \mbox{$\lim_{\lambda\rightarrow\infty}r(Q_\lambda)=0$}. In particular, there is a unique $\lambda_0\in(0,\infty)$ with $r(Q_{\lambda_0})=1$.
\end{lem}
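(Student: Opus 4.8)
The plan is to prove the three properties of $\lambda\mapsto r(Q_\lambda)$ separately, relying on the strong positivity of $Q_\lambda$ established in Lemma~\ref{H} and on an explicit variational-type characterization of the spectral radius. First I would record the obvious monotonicity of the integrand: for $0\le\lambda_1<\lambda_2$ and $a\in(0,a_m)$ one has $e^{-\lambda_2\int_0^a h}\,\Pi_0(a,0)\le e^{-\lambda_1\int_0^a h}\,\Pi_0(a,0)$ as positive operators, with strict inequality (after testing against $B_{\lambda_1}'$) on the set where $\int_0^a h(r)\,\rd r>0$, which by \eqref{9} has positive measure. Hence $Q_{\lambda_2}<Q_{\lambda_1}$ in the sense that $Q_{\lambda_1}-Q_{\lambda_2}$ is a nonzero positive operator. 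To turn this into strict monotonicity of spectral radii I would use the standard argument for strongly positive compact operators: pairing $Q_{\lambda_2}B_{\lambda_2}=r(Q_{\lambda_2})B_{\lambda_2}$ with the strictly positive eigenfunctional $B_{\lambda_1}'$ of $Q_{\lambda_1}'$ gives
$$
r(Q_{\lambda_2})\,\langle B_{\lambda_1}',B_{\lambda_2}\rangle_{E_\varsigma}=\langle B_{\lambda_1}',Q_{\lambda_2}B_{\lambda_2}\rangle_{E_\varsigma}<\langle B_{\lambda_1}',Q_{\lambda_1}B_{\lambda_2}\rangle_{E_\varsigma}=\langle Q_{\lambda_1}'B_{\lambda_1}',B_{\lambda_2}\rangle_{E_\varsigma}=r(Q_{\lambda_1})\,\langle B_{\lambda_1}',B_{\lambda_2}\rangle_{E_\varsigma},
$$
and since $B_{\lambda_2}\in\mathrm{int}(E_\varsigma^+)$ while $B_{\lambda_1}'$ is strictly positive, the pairing is strictly positive, whence $r(Q_{\lambda_2})<r(Q_{\lambda_1})$.

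For continuity I would show that $\lambda\mapsto Q_\lambda$ is continuous from $[0,\infty)$ into $\mathcal{L}(E_\varsigma)$ — in fact into $\mathcal{L}(E_\varsigma,E_\vartheta)$ with $\vartheta=\theta+\varsigma$ as in the proof of Lemma~\ref{H}. This follows by dominated convergence from the bound \eqref{12}, the integrability condition \eqref{15}, and the pointwise estimate $|e^{-\lambda_2\int_0^a h}-e^{-\lambda_1\int_0^a h}|\le |\lambda_2-\lambda_1|\int_0^a h(r)\,\rd r\le |\lambda_2-\lambda_1|\,\|h\|_{L_1}$ coming from \eqref{9}; the factor $b_0(a)e^{\omega_0 a}$ provides the required integrable majorant uniformly in $\lambda$ on bounded $\lambda$-intervals. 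Continuity of the spectral radius of a norm-continuous family of compact strongly positive operators (with the eigenvalue simple and isolated, by Lemma~\ref{H}) then follows from standard perturbation theory, e.g. \cite[I.Thm.2.11.1]{LQPP}-type compactness together with the Krein–Rutman characterization, or directly from upper semicontinuity of the spectrum plus the lower bound that $r(Q_\lambda)\ge\langle f',Q_\lambda\varphi\rangle_{E_\varsigma}/\langle f',\varphi\rangle_{E_\varsigma}$ for suitable fixed $\varphi,f'$.

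For the limit $\lim_{\lambda\to\infty}r(Q_\lambda)=0$ I would exploit assumption \eqref{9}, namely that $h>0$ on some interval $(0,a_*)$ with $a_*>0$, say $h\ge\eta>0$ there. Split $Q_\lambda=\int_0^{a_*}+\int_{a_*}^{a_m}$. On $(0,a_*)$ the factor $e^{-\lambda\int_0^a h}\le e^{-\lambda\eta a}$ is controlled, and the remaining $\|\Pi_0(a,0)\|_{\mathcal{L}(E_\varsigma,E_\vartheta)}\le M_0 e^{\omega_0 a} a^{-\theta}$ (from \eqref{12}) is integrable against $b_0$, so this piece tends to $0$ in $\mathcal{L}(E_\varsigma)$ as $\lambda\to\infty$ by dominated convergence. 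On $(a_*,a_m)$ we have $\int_0^a h\ge\int_0^{a_*}h\ge\eta a_*>0$, so $e^{-\lambda\int_0^a h}\le e^{-\lambda\eta a_*}\to0$ uniformly, and again \eqref{15} gives the integrable majorant. Hence $\|Q_\lambda\|_{\mathcal{L}(E_\varsigma)}\to0$, so $r(Q_\lambda)\to0$. Combined with $r(Q_0)>1$ from \eqref{18}, continuity, and strict monotonicity, the intermediate value theorem yields a unique $\lambda_0\in(0,\infty)$ with $r(Q_{\lambda_0})=1$. The main obstacle is the continuity of $\lambda\mapsto r(Q_\lambda)$: one must be careful that the perturbation argument genuinely applies — this is why I would route through norm-continuity of $Q_\lambda$ in $\mathcal{L}(E_\varsigma)$ (or even compact-operator norm continuity in $\mathcal{L}(E_\varsigma,E_\vartheta)$) rather than mere strong continuity, since spectral radius is only continuous under norm convergence in general, and the simplicity/isolation of $r(Q_\lambda)$ from Lemma~\ref{H} is what makes the standard eigenvalue perturbation results available.
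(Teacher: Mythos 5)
Your argument is correct, and the monotonicity step is essentially the paper's: both establish that $Q_{\lambda_1}-Q_{\lambda_2}$ is a nontrivial positive (in the paper, strongly positive) operator and then pair the eigenvalue equation for $Q_{\lambda_2}$ against the strictly positive eigenfunctional $B_{\lambda_1}'$ of $Q_{\lambda_1}'$. Where you genuinely diverge is the continuity step. The paper avoids perturbation theory altogether: using $h\in L_1$ it sandwiches $Q_{\lambda\pm\varepsilon}$ between $e^{\mp\varepsilon\|h\|_1}Q_\lambda$ in the order sense, applies the comparison principle for spectral radii of positive operators (\cite[Cor.12.4]{DanersKochMedina}: $Q\phi\le s\phi$ with $\phi$ in the cone forces $r(Q)< s$), and obtains
$e^{-\varepsilon\|h\|_1}r(Q_\lambda)<r(Q_{\lambda+\varepsilon})\le r(Q_{\lambda_j})\le r(Q_{\lambda-\varepsilon})<e^{\varepsilon\|h\|_1}r(Q_\lambda)$,
which yields continuity (in fact local Lipschitz continuity of $\log r(Q_\lambda)$) with an explicit modulus. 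Your route --- norm continuity of $\lambda\mapsto Q_\lambda$ in $\mathcal{L}(E_\varsigma)$ via dominated convergence, then continuity of the spectral radius from Kato-type perturbation of the simple isolated eigenvalue --- is also valid, and you are right to insist on norm (not merely strong) continuity and on the simplicity/isolation from Lemma~\ref{H}, since the spectral radius is only upper semicontinuous in general; the lower semicontinuity really does come from tracking the perturbed simple eigenvalue. One caveat: your fallback lower bound $r(Q_\lambda)\ge\langle f',Q_\lambda\varphi\rangle_{E_\varsigma}/\langle f',\varphi\rangle_{E_\varsigma}$ does not hold for arbitrary positive $f',\varphi$ (the correct order-theoretic lower bound is $Q_\lambda\varphi\ge s\varphi\Rightarrow r(Q_\lambda)\ge s$), so you should rely on the perturbation argument rather than that inequality. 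Your treatment of $\lim_{\lambda\to\infty}r(Q_\lambda)=0$ by splitting the integral at $a_*$ is a correct and more detailed version of what the paper dismisses as easy, and the conclusion via the intermediate value theorem matches the paper.
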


\begin{proof}
Since $b_0\ge 0$ with $b_0\not\equiv 0$ and $h\ge 0$ with $h>0$ near $a=0$, it readily follows from \eqref{13} analogously to the proof of Lemma \ref{H} that $Q_\lambda -Q_\xi$ is strongly positive for $\xi>\lambda\ge 0$, that is,
\bqn\label{18b}
(Q_\lambda -Q_\xi)\phi\in\mathrm{int}(E_\varsigma^+)\ ,\quad \phi\in E_\varsigma^+\setminus\{0\}\ ,\quad \xi>\lambda\ge 0\ .
\eqn
Given $\lambda\ge 0$, let $B_\lambda\in\mathrm{int}(E_\varsigma^+)$ and $B_\lambda'\in E_\varsigma'$ be the eigenvectors and strictly positive eigenfunctionals introduced in Lemma~\ref{H}. Then, for $\xi>\lambda\ge 0$, we deduce from \eqref{18b} that
$$
r(Q_\lambda)\,\langle B_\lambda',B_\xi\rangle_{E_\varsigma}\, =\,\langle Q_\lambda'B_\lambda', B_\xi\rangle_{E_\varsigma}\, =\, \langle B_\lambda',Q_\lambda B_\xi\rangle_{E_\varsigma}\,>\,\langle B_\lambda',Q_\xi B_\xi\rangle_{E_\varsigma} =\, r(Q_\xi)\, \langle B_\lambda',B_\xi\rangle_{E_\varsigma} \ ,
$$
whence $r(Q_\lambda)>r(Q_\xi)$ so that $[\lambda\mapsto r(Q_\lambda)]$ is strictly decreasing. Next, let $\lambda> 0$ (the case $\lambda=0$ is analogous) and consider a sequence $(\lambda_j)$ such that $0\le\lambda_j\rightarrow\lambda$. Given $\varepsilon>0$ sufficiently smal we may assume that $0\le\lambda-\varepsilon<\lambda_j<\lambda+\varepsilon$ for all $j\in \N$. Note then that \eqref{9} implies
$$
Q_{\lambda-\varepsilon}\, B_\lambda\le e^{\varepsilon \|h\|_1}\,Q_\lambda\, B_\lambda\,=\, e^{\varepsilon \|h\|_1}\, r(Q_\lambda)\, B_\lambda
$$
with $\|h\|_1$ denoting the $L_1$-norm of $h$. Since $B_\lambda\in E_\varsigma^+$ we derive
\bqn\label{21}
r(Q_\lambda)\,e^{\varepsilon \|h\|_1}\, >\, r(Q_{\lambda-\varepsilon})
\eqn
from \cite[Cor.12.4]{DanersKochMedina} and \eqref{6}. Conversely, we have
$$
Q_{\lambda}\,B_{\lambda+\varepsilon}\,\le\, e^{\varepsilon \|h\|_1}\,Q_{\lambda+\varepsilon}\, B_{\lambda+\varepsilon}\,=\, e^{\varepsilon \|h\|_1}\,r(Q_{\lambda+\varepsilon})\, B_{\lambda+\varepsilon}
$$
and thus, invoking again \cite[Cor.12.4]{DanersKochMedina},
\bqn\label{22}
 e^{\varepsilon \|h\|_1}\, r(Q_{\lambda+\varepsilon})\, >\, r(Q_{\lambda})\ .
\eqn
Therefore, combining \eqref{21}, \eqref{22} and recalling that $r(Q_\lambda)$ is strictly decreasing in $\lambda$, we obtain
$$
e^{-\varepsilon \|h\|_1}\, r(Q_{\lambda})\,<\,r(Q_{\lambda+\varepsilon})\,<\,r(Q_{\lambda_j})\,<\,r(Q_{\lambda-\varepsilon})\,<\,e^{\varepsilon \|h\|_1}\, r(Q_{\lambda})\ .
$$
Letting $\varepsilon\rightarrow 0$ implies $\lim_{j\rightarrow\infty} r(Q_{\lambda_j})=r(Q_{\lambda})$, whence the continuity of the function \mbox{$\lambda\mapsto r(Q_\lambda)$}. Finally, the assumption that $h>0$ near $a=0$ together with \eqref{12} and \eqref{15} easily entails that $$0< r(Q_\lambda)\le \|Q_\lambda\|_{\mathcal{L}(E_\varsigma)}\rightarrow0\ ,\quad \lambda\rightarrow\infty\ ,$$ from which the assertion follows in view of \eqref{18}.
\end{proof}

\subsection{The nonlinear problem}

To investigate the nonlinear problem \eqref{5a}, \eqref{5b} we apply the theorem of Crandall-Rabinowitz \cite{CrandallRabinowitz}. Clearly, the solutions $(\lambda,u)=(\lambda_0+t,u)$ of \eqref{5a}, \eqref{5b} are the zeros of the function
\bqnn
F(t,u):=\left(\begin{array}{cc} \partial_a u+A(u)u+(\lambda_0+t)h u-g(\lambda_0+t,u)u\\
u(0)-\int_0^{a_m}b(u,a)u(a)\rd a\end{array}\right)\ .
\eqnn
Assumptions \eqref{7}, \eqref{8}, \eqref{9}, \eqref{10}, and \eqref{15} imply that
$$
F:(-\lambda_0,\infty)\times\Sigma\rightarrow \E_0\times E_\varsigma\quad\text{with} \quad F(t,0)=0\ ,\quad t>-\lambda_0\ .
$$
Moreover, it is easily seen that all partial derivatives $F_t$, $F_u$, and $F_{tu}$ exist and are continuous and that, since $g(\lambda,0)\equiv 0$, the Fr\'{e}chet derivatives at $(t,u)=(0,0)$ applied to $\varphi\in \E_1$ are given by
\bqn\label{30}
F_u(0,0)\varphi=\left(\begin{array}{cc} \partial_a \varphi+(A_0+\lambda_0 h)\varphi\\
\varphi(0)-\int_0^{a_m}b_0(a)\varphi(a)\rd a\end{array}\right)\ ,
\eqn
where $A_0:=A(0,\cdot)$, and 
\bqn\label{31}
F_{tu}(0,0)\varphi=\left(\begin{array}{cc} h\varphi\\
0\end{array}\right)\ .
\eqn
Recall that $B_{\lambda_0}\in\mathrm{int}(E_\varsigma^+)$ with $\mathrm{ker}(1-Q_{\lambda_0})=\mathrm{span}\{B_{\lambda_0}\}$. Then \eqref{11C} implies that $\Pi_{(\lambda_0,0)}(\cdot,0)B_{\lambda_0}$ belongs to $\E_1^+$. Moreover, for
\begin{align*}
&\ell_0(u):=\int_0^{a_m} b_0(a)u(a)\, \rd a\ ,\\
 & \big(K_0 f\big)(a):=\int_0^a \Pi_{(\lambda_0,0)}(a,\sigma) f(\sigma)\, \rd \sigma\ ,\quad f\in\E_0\ ,
\end{align*}
we deduce from \eqref{7}, \eqref{u1}, \eqref{u2}, and \eqref{15} that
\bqn\label{32}
\ell_0\in\mathcal{L}(\E_1,E_\varsigma)\ ,\quad K_0\in\mathcal{L}(\E_0,\E_1)\ .
\eqn
With these notations we can state the following result.

\begin{lem}\label{D}
$L:=F_u(0,0)\in\mathcal{L}(\E_1,\E_0\times E_\varsigma)$ is a Fredholm operator of index 0. In fact,
\begin{align*}
&\mathrm{ker}(L)=\mathrm{span}\{\Pi_{(\lambda_0,0)}(\cdot,0)B_{\lambda_0}\}\ ,\\
& \mathrm{rg}(L)=\big\{(\varphi,\psi)\in\E_0\times E_\varsigma\,;\, \psi+\ell_0(K_0\varphi)\in\mathrm{rg}(1-Q_{\lambda_0})\big\}
\end{align*}
are both closed and
$
\mathrm{dim}(\mathrm{ker}(L))=\mathrm{codim}(\mathrm{rg}(L))=1
$.
\end{lem}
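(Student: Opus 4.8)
The plan is to read off $\kk(L)$ and $\im(L)$ directly from the variation-of-constants formula \eqref{u1} and from the spectral properties of $Q_{\lambda_0}$ provided by Lemmas~\ref{H} and~\ref{C}. Throughout I will use that $\lambda_0>0$ is the value with $r(Q_{\lambda_0})=1$ (Lemma~\ref{C}), so that $\kk(1-Q_{\lambda_0})=\spann\{B_{\lambda_0}\}$ with $B_{\lambda_0}\in\mathrm{int}(E_\varsigma^+)$, and that $Q_{\lambda_0}\in\mathcal{K}(E_\varsigma)$ with $1$ a simple eigenvalue (Lemma~\ref{H}); consequently $1-Q_{\lambda_0}$ is a Fredholm operator of index $0$ on $E_\varsigma$, its range is closed of codimension $1$, and $\im(1-Q_{\lambda_0})=\kk(B_{\lambda_0}')$ since $B_{\lambda_0}'$ is the eigenfunctional of $Q_{\lambda_0}'$ to the eigenvalue $1$.

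For the kernel I would argue that $L\varphi=0$ means $\partial_a\varphi+(A_0+\lambda_0h)\varphi=0$ together with $\varphi(0)=\ell_0(\varphi)$; by \eqref{u1} with vanishing inhomogeneity this forces $\varphi=\Pi_{(\lambda_0,0)}(\cdot,0)\varphi(0)$, and then \eqref{11A} and the definition of $Q_{\lambda_0}$ give $\varphi(0)=\int_0^{a_m}b_0(a)\Pi_{(\lambda_0,0)}(a,0)\varphi(0)\,\rd a=Q_{\lambda_0}\varphi(0)$, so $\varphi(0)\in\spann\{B_{\lambda_0}\}$. Conversely, $\Pi_{(\lambda_0,0)}(\cdot,0)B_{\lambda_0}$ lies in $\E_1$ by \eqref{11C} and solves \eqref{16a}, \eqref{16b}; hence $\kk(L)=\spann\{\Pi_{(\lambda_0,0)}(\cdot,0)B_{\lambda_0}\}$ is one-dimensional.

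For the range, given $(\varphi,\psi)\in\E_0\times E_\varsigma$, I would use \eqref{u1}, \eqref{u2} and \eqref{11C} to note that the solutions $u\in\E_1$ of $\partial_au+(A_0+\lambda_0h)u=\varphi$ are exactly $u=\Pi_{(\lambda_0,0)}(\cdot,0)u^0+K_0\varphi$ with $u^0:=u(0)$ ranging freely over $E_\varsigma$; inserting this into $u(0)-\ell_0(u)=\psi$ and using $\ell_0(\Pi_{(\lambda_0,0)}(\cdot,0)u^0)=Q_{\lambda_0}u^0$ turns the boundary condition into $(1-Q_{\lambda_0})u^0=\psi+\ell_0(K_0\varphi)$. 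Thus $(\varphi,\psi)\in\im(L)$ if and only if $\psi+\ell_0(K_0\varphi)\in\im(1-Q_{\lambda_0})$, which is the asserted formula. To conclude I would observe that, by \eqref{32}, $\Lambda(\varphi,\psi):=\psi+\ell_0(K_0\varphi)$ defines a bounded surjection $\E_0\times E_\varsigma\to E_\varsigma$, so $\im(L)=\Lambda^{-1}(\im(1-Q_{\lambda_0}))=\kk(B_{\lambda_0}'\circ\Lambda)$ is closed of codimension $1$, the functional $B_{\lambda_0}'\circ\Lambda$ being nontrivial since $\langle B_{\lambda_0}',B_{\lambda_0}\rangle_{E_\varsigma}>0$; together with $\dimens\,\kk(L)=1$ this shows that $L$ is a Fredholm operator of index $0$.

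I expect the only genuinely nonroutine point to be the parametrization of the solutions of the first component of $Lu=(\varphi,\psi)$ by the family $\{\Pi_{(\lambda_0,0)}(\cdot,0)u^0+K_0\varphi:u^0\in E_\varsigma\}$, which rests on the maximal regularity assumption \eqref{11C} (via \eqref{u1}, \eqref{u2}); the rest is bookkeeping with \eqref{11A}, \eqref{32} and the Krein--Rutman structure of $Q_{\lambda_0}$. Alternatively, the Fredholm property could be obtained at the outset by writing $L=S-T$ with $S:=(\partial_a+A_0+\lambda_0h,\gamma_0)\in\mathcal{L}is(\E_1,\E_0\times E_\varsigma)$ and $T\varphi:=(0,\ell_0(\varphi))$ compact, the compactness of $T$ following from an Aubin--Lions type embedding $\E_1\dhr L_p(J,E_\varsigma)$ when $a_m<\infty$ and from a truncation argument exploiting \eqref{15} when $a_m=\infty$; I would, however, favour the direct computation above, which avoids this case distinction and simultaneously yields the explicit descriptions of $\kk(L)$ and $\im(L)$.
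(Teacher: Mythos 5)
Your argument is correct and is essentially the proof the paper has in mind: the paper disposes of Lemma~\ref{D} by citing \cite[Lem.2.1]{WalkerSIMA}, whose content is exactly your computation — parametrize the solutions of the first component via \eqref{u1}/\eqref{11C}, reduce the boundary condition to $(1-Q_{\lambda_0})u^0=\psi+\ell_0(K_0\varphi)$, and invoke the Fredholm alternative for the compact operator $Q_{\lambda_0}$ with simple eigenvalue $1$ from Lemmas~\ref{H} and~\ref{C}. Your write-up merely supplies the details the paper delegates to that reference, and the closedness/codimension bookkeeping via $\Lambda$ and $B_{\lambda_0}'$ is sound.
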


\begin{proof}
This is a reformulation of \cite[Lem.2.1]{WalkerSIMA} using
\eqref{30}, \eqref{32}, \eqref{11C}, Lemma~\ref{H}, and Lemma~\ref{C}.
\end{proof}

This lemma also allows us to validate the transversality condition from \cite{CrandallRabinowitz}.

\begin{lem}\label{E}
We have $F_{tu}(0,0)\big(\Pi_{(\lambda_0,0)}(\cdot,0)B_{\lambda_0}\big)\not\in \mathrm{rg}(L)$.
\end{lem}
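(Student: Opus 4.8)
Set $w_0:=\Pi_{(\lambda_0,0)}(\cdot,0)B_{\lambda_0}$, which by Lemma~\ref{D} spans $\mathrm{ker}(L)$, and recall from \eqref{31} that $F_{tu}(0,0)w_0=(hw_0,0)$, with $hw_0\in\E_0$ since $h\in L_\infty(J,\R)$ and $w_0\in\E_1$. By the description of $\mathrm{rg}(L)$ in Lemma~\ref{D}, the assertion $F_{tu}(0,0)w_0\notin\mathrm{rg}(L)$ is equivalent to $\ell_0(K_0(hw_0))\notin\mathrm{rg}(1-Q_{\lambda_0})$. The plan is: first to identify $\mathrm{rg}(1-Q_{\lambda_0})$ with the null space of the strictly positive eigenfunctional $B_{\lambda_0}'$ from Lemma~\ref{H}; then to evaluate $K_0(hw_0)$ explicitly; and finally to check that the pairing $\langle B_{\lambda_0}',\ell_0(K_0(hw_0))\rangle_{E_\varsigma}$ does not vanish, in fact is strictly positive.

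For the first step: since $Q_{\lambda_0}\in\mathcal{K}(E_\varsigma)$ and the eigenvalue $r(Q_{\lambda_0})=1$ is simple by Lemma~\ref{H}, the operator $1-Q_{\lambda_0}$ is Fredholm of index $0$, so $\mathrm{rg}(1-Q_{\lambda_0})$ is closed of codimension $1$; and from $Q_{\lambda_0}'B_{\lambda_0}'=B_{\lambda_0}'$ one gets $\langle B_{\lambda_0}',(1-Q_{\lambda_0})v\rangle_{E_\varsigma}=0$ for all $v\in E_\varsigma$, so $\mathrm{rg}(1-Q_{\lambda_0})\subseteq\mathrm{ker}(B_{\lambda_0}')$ and, comparing codimensions, $\mathrm{rg}(1-Q_{\lambda_0})=\mathrm{ker}(B_{\lambda_0}')$. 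For the second step I would use only the evolution-operator identity $\Pi_{(\lambda_0,0)}(a,\sigma)\Pi_{(\lambda_0,0)}(\sigma,0)=\Pi_{(\lambda_0,0)}(a,0)$ together with $w_0(\sigma)=\Pi_{(\lambda_0,0)}(\sigma,0)B_{\lambda_0}$ to obtain, for $a\in J$,
\[
(K_0(hw_0))(a)=\int_0^a \Pi_{(\lambda_0,0)}(a,\sigma)\,h(\sigma)\,w_0(\sigma)\,\rd\sigma=\Big(\int_0^a h(\sigma)\,\rd\sigma\Big)\,\Pi_{(\lambda_0,0)}(a,0)B_{\lambda_0}=H(a)\,w_0(a)\ ,
\]
where $H(a):=\int_0^a h(r)\,\rd r$; here $K_0(hw_0)\in\E_1$ by \eqref{32} and the scalar $\int_0^a h$ is finite by \eqref{9}. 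Consequently, since $B_{\lambda_0}'$ is a bounded functional,
\[
\langle B_{\lambda_0}',\ell_0(K_0(hw_0))\rangle_{E_\varsigma}=\int_0^{a_m}b_0(a)\,H(a)\,\langle B_{\lambda_0}',w_0(a)\rangle_{E_\varsigma}\,\rd a\ .
\]

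It remains to see that this last integral is strictly positive, which is the crux of the argument. By \eqref{11A} we have $w_0(a)=e^{-\lambda_0 H(a)}\Pi_0(a,0)B_{\lambda_0}$, which lies in $\mathrm{int}(E_\varsigma^+)$ for every $a\in(0,a_m)$ because $\Pi_0(a,0)$ is strongly positive by \eqref{13} and $B_{\lambda_0}\in E_\varsigma^+\setminus\{0\}$; since $B_{\lambda_0}'$ is strictly positive, $\langle B_{\lambda_0}',w_0(a)\rangle_{E_\varsigma}>0$ on $(0,a_m)$. Next, $h\ge 0$ with $h>0$ near $a=0$ by \eqref{9}, so $H(a)>0$ for every $a\in(0,a_m)$. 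Finally $b_0\ge 0$ with $b_0\not\equiv 0$ by \eqref{15}, hence $b_0>0$ on a subset of $(0,a_m)$ of positive measure, on which the nonnegative integrand $b_0(a)H(a)\langle B_{\lambda_0}',w_0(a)\rangle_{E_\varsigma}$ is strictly positive. Therefore the integral is positive, so $\ell_0(K_0(hw_0))\notin\mathrm{ker}(B_{\lambda_0}')=\mathrm{rg}(1-Q_{\lambda_0})$, and the transversality condition $F_{tu}(0,0)w_0\notin\mathrm{rg}(L)$ follows. The only delicate point is this positivity step, which combines the strong positivity of $\Pi_0(\cdot,0)$, the strict positivity of the dual eigenfunctional $B_{\lambda_0}'$, and the hypotheses $h>0$ near $0$ and $b_0\not\equiv0$; everything else is routine manipulation with the objects already introduced in Lemmas~\ref{H}--\ref{D}.
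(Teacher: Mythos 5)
Your proof is correct, and the decisive step is genuinely different from the paper's. Both arguments reduce the claim via Lemma~\ref{D} to showing $z:=\ell_0\big(K_0(h\,\Pi_{(\lambda_0,0)}(\cdot,0)B_{\lambda_0})\big)\notin\mathrm{rg}(1-Q_{\lambda_0})$, and both perform the same explicit evaluation $K_0(hw_0)(a)=\big(\int_0^a h\big)\Pi_{(\lambda_0,0)}(a,0)B_{\lambda_0}$. From there the paper takes a different route: it invokes the commuting condition \eqref{14} to interchange the order of integration and show $Q_{\lambda_0}z=z$, i.e.\ $z\in\mathrm{ker}(1-Q_{\lambda_0})\setminus\{0\}$, and then concludes from the algebraic simplicity of the eigenvalue $1$ (Riesz decomposition $E_\varsigma=\mathrm{ker}(1-Q_{\lambda_0})\oplus\mathrm{rg}(1-Q_{\lambda_0})$) that $z\notin\mathrm{rg}(1-Q_{\lambda_0})$. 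You instead pair $z$ against the dual eigenfunctional $B_{\lambda_0}'$ from Lemma~\ref{H}, observe that $B_{\lambda_0}'$ annihilates $\mathrm{rg}(1-Q_{\lambda_0})$ because $Q_{\lambda_0}'B_{\lambda_0}'=B_{\lambda_0}'$, and show $\langle B_{\lambda_0}',z\rangle_{E_\varsigma}>0$ by combining the strong positivity of $\Pi_0(a,0)$, the strict positivity of $B_{\lambda_0}'$, $h>0$ near $a=0$, and $b_0\not\equiv 0$ — exactly the same positivity ingredients the paper uses only to check $z\ne 0$. The notable payoff of your version is that it does not use \eqref{14} at all; since \eqref{14} enters the proof of Theorem~\ref{F} only through Lemma~\ref{E}, your argument would permit dropping the commutativity hypothesis from the abstract result. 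Two minor remarks: the Fredholm/codimension count identifying $\mathrm{rg}(1-Q_{\lambda_0})$ with $\mathrm{ker}(B_{\lambda_0}')$ is more than you need — the inclusion $\mathrm{rg}(1-Q_{\lambda_0})\subseteq\mathrm{ker}(B_{\lambda_0}')$ together with $\langle B_{\lambda_0}',z\rangle\ne 0$ already suffices; and your interchange of $B_{\lambda_0}'$ with the Bochner integral defining $\ell_0$ is justified since $B_{\lambda_0}'$ is a bounded functional and $b_0$ is scalar-valued by \eqref{15}.
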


\begin{proof}
According to \eqref{31} and Lemma \ref{D} we have to check that
$$
z:=\ell_0\big(K_0(h\Pi_{(\lambda_0,0)}(\cdot,0)B_{\lambda_0})\big)\not\in\mathrm{rg}(1-Q_{\lambda_0})\ .
$$
Due to assumptions \eqref{9}, \eqref{15}, and properties of evolution operators, we compute
\bqnn\begin{split}
z&=\int_0^{a_m}b_0(a)\int_0^a\Pi_{(\lambda_0,0)}(a,\sigma)h(\sigma)\Pi_{(\lambda_0,0)}(\sigma,0) B_{\lambda_0}\ \rd\sigma\,\rd a\\
&=\int_0^{a_m}b_0(a)\left(\int_0^ah(\sigma)\,\rd\sigma\right)\,\Pi_{(\lambda_0,0)}(a,0) B_{\lambda_0}\,\rd a\ .
\end{split}
\eqnn
Thus $z\not=0$ due to \eqref{9}, \eqref{13}, and \eqref{15}. Using the commuting condition \eqref{14} we derive on interchanging the order of integration that
\bqnn\begin{split}
Q_{\lambda_0}z&=\int_0^{a_m}b_0(s)\,\Pi_{(\lambda_0,0)}(s,0)\int_0^{a_m}b_0(a)\left(\int_0^ah(\sigma)\,\rd\sigma\right)\, \Pi_{(\lambda_0,0)}(a,0) B_{\lambda_0}\ \rd a\,\rd s\\
&=\int_0^{a_m}b_0(a)\,\left(\int_0^ah(\sigma)\,\rd\sigma\right)\Pi_{(\lambda_0,0)}(a,0)\int_0^{a_m}b_0(s)\, \Pi_{(\lambda_0,0)}(s,0) B_{\lambda_0}\ \rd s\,\rd a\ .
\end{split}
\eqnn
Hence, simplifying the integral by recognizing $Q_{\lambda_0}B_{\lambda_0}=B_{\lambda_0}$ in the integrand and reversing the computations we obtain $Q_{\lambda_0}z=z$, that is, $z\in\mathrm{ker}(1-Q_{\lambda_0})$. But then $z\not\in\mathrm{rg}(1-Q_{\lambda_0})$ since $r(Q_{\lambda_0})=1$ is a simple eigenvalue of the compact operator $Q_{\lambda_0}$.
\end{proof}

Occurrence of local bifurcation in \eqref{5a}, \eqref{5b} is then a consequence of Lemma~\ref{D}, Lemma~\ref{E}, and \cite[Thm.1.7]{CrandallRabinowitz}.

\begin{thm}\label{F}
Suppose \eqref{6}-\eqref{11}, \eqref{11B}, \eqref{11C}, \eqref{13}-\eqref{15}, \eqref{18}. Further let $\lambda_0>0$ with $r(Q_{\lambda_0})=1$. Then $(\lambda_0,0)$ is a bifurcation point for \eqref{5a}, \eqref{5b}. More precisely, there are $\varepsilon_0>0$ and a unique branch $\{(\lambda(\ve),u(\ve))\,;\,\vert\ve\vert<\ve_0\}$ in $\R^+\times\E_1$ emanating from $(\lambda_0,0)$ with $u(\ve)\not\equiv 0$ if $\ve\not= 0$ of the form
\bqn\label{40}
u(\ve)=\ve\big(\Pi_{(\lambda_0,0)}(\cdot,0)B_{\lambda_0}+z(\ve)\big)\ ,\quad \vert\ve\vert<\ve_0\ .
\eqn
Both $\lambda:(-\ve_0,\ve_0)\rightarrow \R^+$ and $z:(-\ve_0,\ve_0)\rightarrow Z$ are continuous, where $\E_1=\mathrm{ker}(L)\oplus Z$ with an arbitrary complement $Z$. Moreover, $u(\ve)\in\E_1^+$ and $\gamma_0 u(\ve)\in\mathrm{int}(E_\varsigma^+)$ for $\ve\in (0,\ve_0)$.
\end{thm}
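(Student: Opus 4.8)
The plan is to obtain Theorem~\ref{F} as a direct application of the Crandall--Rabinowitz bifurcation theorem \cite[Thm.~1.7]{CrandallRabinowitz} to the function $F$ introduced above, and then to extract the positivity of the bifurcating solutions as a separate, more delicate argument. First I would record that $F\in C^1((-\lambda_0,\infty)\times\Sigma,\E_0\times E_\varsigma)$ with $F(t,0)=0$ for all $t>-\lambda_0$, using \eqref{7}--\eqref{10} and \eqref{15}; the continuity of the partial derivatives $F_t$, $F_u$, $F_{tu}$ (which also needs to be checked from those same regularity assumptions on $A$, $g$, $b$) guarantees the regularity hypothesis of the theorem. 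By Lemma~\ref{D}, $L=F_u(0,0)$ is a Fredholm operator of index zero with one-dimensional kernel spanned by $\Pi_{(\lambda_0,0)}(\cdot,0)B_{\lambda_0}$, and by Lemma~\ref{E} the transversality condition $F_{tu}(0,0)\big(\Pi_{(\lambda_0,0)}(\cdot,0)B_{\lambda_0}\big)\notin\im(L)$ holds. Hence the Crandall--Rabinowitz theorem applies and yields the local branch: there exist $\ve_0>0$ and continuous functions $\lambda:(-\ve_0,\ve_0)\rightarrow\R$, $z:(-\ve_0,\ve_0)\rightarrow Z$ (with $\E_1=\kk(L)\oplus Z$, $z(0)=0$, $\lambda(0)=\lambda_0$) such that the solutions of $F=0$ near $(0,0)$ other than the trivial line are exactly $u(\ve)=\ve\big(\Pi_{(\lambda_0,0)}(\cdot,0)B_{\lambda_0}+z(\ve)\big)$ with parameter $\lambda(\ve)$, which is \eqref{40}. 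Shrinking $\ve_0$ if necessary, $\lambda(\ve)>0$, so the branch indeed lies in $\R^+\times\E_1$.

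It then remains to prove the positivity statement $u(\ve)\in\E_1^+$ and $\gamma_0 u(\ve)\in\mathrm{int}(E_\varsigma^+)$ for $\ve\in(0,\ve_0)$, and this I expect to be the main obstacle, since the Crandall--Rabinowitz theorem by itself says nothing about the cone. The strategy here is a continuity/perturbation argument anchored at $\ve=0$. For fixed small $\ve>0$, write the first component of $F(t(\ve),u(\ve))=0$ as a variation-of-constants representation: $u(\ve)$ solves $\partial_a u + (A(u(\ve))+\lambda(\ve)h)u = g(\lambda(\ve),u(\ve))u$ together with the renewal condition, so that $u(\ve)(a) = \Pi_{(\lambda(\ve),u(\ve))}(a,0)\gamma_0 u(\ve)$ by uniqueness for the associated linear problem, where $\Pi_{(\lambda,u)}$ is the positive evolution operator from \eqref{11}. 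Since this evolution operator is positive and $\Pi_{(\lambda_0,0)}(a,0)$ is even strongly positive by \eqref{13}, the sign of $u(\ve)$ is governed by the sign of its trace $\gamma_0 u(\ve)\in E_\varsigma$. Dividing out $\ve$, one has $\gamma_0 u(\ve)/\ve = B_{\lambda_0}+\gamma_0 z(\ve) \to B_{\lambda_0}\in\mathrm{int}(E_\varsigma^+)$ as $\ve\to 0$ in $E_\varsigma$ (using $\gamma_0\in\mathcal{L}(\E_1,E_\varsigma)$ and $z(\ve)\to 0$ in $\E_1$); since $\mathrm{int}(E_\varsigma^+)$ is open and nonempty by \eqref{6}, for $\ve$ small enough $\gamma_0 u(\ve)\in\mathrm{int}(E_\varsigma^+)\setminus\{0\}$. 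Feeding this back into the representation $u(\ve)(a)=\Pi_{(\lambda(\ve),u(\ve))}(a,0)\gamma_0 u(\ve)$ gives $u(\ve)(a)\in E_0^+$ for every $a$, hence $u(\ve)\in\E_1^+$, and feeding it into the renewal equation $\gamma_0 u(\ve)=\int_0^{a_m} b(u(\ve),a)u(\ve)(a)\,\rd a$ one more time, together with $F^+\cdot E_0^+\hookrightarrow E_0^+$ from \eqref{7} and strong positivity of $\Pi_{(\lambda_0,0)}$, upgrades $\gamma_0 u(\ve)$ to an interior point of $E_\varsigma^+$.

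A point demanding care in the above is that the evolution operator $\Pi_{(\lambda(\ve),u(\ve))}$ depends on the solution itself, so the representation $u(\ve)(a)=\Pi_{(\lambda(\ve),u(\ve))}(a,0)\gamma_0 u(\ve)$ is not a closed formula but merely an identity valid for the particular solution $u(\ve)$; this suffices for the positivity conclusion because positivity of $\Pi_{(\lambda,u)}$ holds for every $(\lambda,u)\in\R^+\times\Sigma$ by \eqref{11}. One also needs $u(\ve)\in\Sigma$ for the abstract hypotheses to apply, which is automatic after shrinking $\ve_0$ since $u(\ve)\to 0$ in $\E_1$. Finally, the stated function-space regularity in the Banach-space formulation is exactly that of $\E_1=L_p(J,E_1)\cap W_p^1(J,E_0)$, embedded in $BUC(J,E_\varsigma)$, so no further work is needed there; the concrete spaces appearing in Theorem~\ref{P} are recovered in Section~\ref{sec3}.
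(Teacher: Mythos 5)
Your proposal is correct and follows essentially the same route as the paper: Crandall--Rabinowitz via Lemma~\ref{D} and Lemma~\ref{E} for the branch, then positivity by observing that $\tfrac1\ve\gamma_0u(\ve)=B_{\lambda_0}+\gamma_0z(\ve)$ lands in $\mathrm{int}(E_\varsigma^+)$ for small $\ve>0$ and propagating this through the positive evolution operator $\Pi_{(\lambda(\ve),u(\ve))}(\cdot,0)$. The only cosmetic difference is your final pass through the renewal equation to ``upgrade'' $\gamma_0u(\ve)$ to an interior point, which is superfluous since the interiority already follows from the openness of $\mathrm{int}(E_\varsigma^+)$ (the paper instead phrases this step via quasi-interior points and \cite[Prop.A.2.10]{ClementEtAl}).
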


\begin{proof}
The existence of a nontrivial branch follows from Lemma~\ref{D} and Lemma~\ref{E} by applying \cite[Thm.1.7]{CrandallRabinowitz}. It remains to prove the positivity assertion for $\ve\in (0,\ve_0)$. Note that from \eqref{40} we have, for $\ve\in (0,\ve_0)$ with $\ve_0$ sufficiently small,
$$
\frac{1}{\ve}\gamma_0u(\ve)=B_{\lambda_0}+\gamma_0z(\ve)\in \mathrm{int}(E_\varsigma^+)
$$
since $z(\ve)\rightarrow 0$ in $\E_1\hookrightarrow BUC(J,E_\varsigma)$ and $B_{\lambda_0}\in\mathrm{int}(E_\varsigma^+)$. On the one hand, we derive $\gamma_0u(\ve)\in E_\varsigma^+$ and therefore, by positivity of the evolution operators assumed in \eqref{11},
$$
u(\ve)=\Pi_{(\lambda(\ve),u(\ve))}(\cdot,0)\gamma_0u(\ve)\in\E_1^+\ ,\quad \ve\in (0,\ve_0)\ .
$$
On the other hand, $\frac{1}{\ve}\gamma_0u(\ve)$ and thus also $\gamma_0u(\ve)$ are quasi-interior points of $E_\varsigma^+$, that is, $\langle f, \gamma_0u(\ve)\rangle_{E_\varsigma} >0$ for each $f\in E_\varsigma'\setminus\{0\}$ with $f\ge 0$. So \eqref{6} and \cite[Prop.A.2.10]{ClementEtAl} imply that $\gamma_0u(\ve)$ is an interior point of $E_\varsigma^+$.
\end{proof}

\begin{rem}\label{R1}
We proved a local bifurcation result for \eqref{5a}, \eqref{5b} under the assumption that $h\ge 0$. However, the statement of Theorem~\ref{F} still holds true if $h\le 0$. The only modification consists of replacing $h$ by $-h$ in assumption \eqref{9} so that the spectral radius $r(Q_\lambda)$ is strictly increasing in $\lambda$ (see Lemma~\ref{C}) and one thus has, in addition, to replace \eqref{18} by the assumption that $r(Q_0)<1$. 

Moreover, if $h<0$ one can even prove a global bifurcation result using the Rabinowitz alternative \cite{RBA} provided that the nonlinearities in the operator $A(u,a)$ are of ``lower order'', that is, if $A(u,a)$ is a sum of operators $A_0(a)+A_*(u,a)$, where $A_0(a)\in\mathcal{L}(E_1,E_0)$ and $A_*(u,a)\in\mathcal{L}(E_\theta,E_0)$ with $\theta\in [0,1)$. The approach is similar to \cite{WalkerJDE}. We also refer to \cite{DelgadoEtAl2} where the case $h\equiv -1$ is considered with linear diffusion.
\end{rem}

\section{Examples}\label{sec3}

Let $\Om\subset\R^N$, $N\ge 1$, be a bounded and smooth domain lying locally on one side of $\partial\Om$. Let the boundary $\partial\Om$ be the distinct union of two sets $\Gamma_0$ and $\Gamma_1$ both of which are open and closed in $\partial\Omega$. Let the maximal age be finite, i.e. let $a_m\in (0,\infty)$ and set $J:=[0,a_m]$. 

\subsection{A general example}\label{3.1}

Consider a second order differential operator of the form
\bqn\label{50a}
\Ac(U(x),a)w:=-\divv_x\big(D(U(x),a)\nabla_x w\big)+d\big(U(x),a\big)\cdot\nabla_x w\ ,
\eqn
where, for some $\rho>0$,
\bqn\label{50g}
D\in C^{5-,\rho}(\R\times J)\quad\text{with}\quad D(z,a)\ge d_0>0\ ,\quad z\in\R\ ,\quad a\in J\ ,
\eqn
and
\bqn
\begin{aligned}\label{50aa}
&d\in C^{4-,\rho}(\R\times J,\R^N)\quad \text{with}\quad d(0,\cdot)\equiv 0\ .
\end{aligned}
\eqn
For simplicity we refrain from an explicit dependence of $\Ac$ on $x\in \Om$.
Let 
\bqn\label{52}
\nu_0\in C^1(\Gamma_1)\ ,\qquad \nu_0(x)\ge 0\ ,\quad x\in\Gamma_1\ ,
\eqn
and let $\nu$ denote the outward unit normal to $\Gamma_1$. Let
\bqnn
\mathcal{B}(x)w:=\left\{\begin{array}{ll} w\ , & \text{on}\ \Gamma_0\ ,\\
 \frac{\partial}{\partial\nu}w+\nu_0(x) w\ , & \text{on}\ \Gamma_1\ .
\end{array}
\right.
\eqnn
Fix $p,q\in (1,\infty)$ with 
\bqn\label{50b}
\frac{2}{p}+\frac{N}{q}<1\ ,
\eqn
and let $E_0:=L_q:=L_q(\Om)$ be ordered by its positive cone of functions that are nonnegative almost everywhere. Observe that 
$$
E_1:=\Wqb^2:=\Wqb^2(\Om):=\big\{u\in W_q^2\,;\, \mathcal{B}u=0\big\}\dhr L_q=E_0 \ ,
$$ 
where $W_q^2(\Om)$ is the usual Sobolev space of order 2 over $L_q(\Om)$. Also note that, up to equivalent norms, the real interpolation spaces between $E_0$ and $E_1$ are subspaces of the Besov spaces $B_{q,p}^{2\xi}:=B_{q,p}^{2\xi}(\Om)$, that is,
\bqnn
E_\xi:=\big(L_q,W_{q,\mathcal{B}}^2\big)_{\xi,p}\,\dot{=}\,\Bqb^{2\xi}:=\left\{\begin{array}{ll} B_{q,p}^{2\xi}\ ,\quad &0<{2\xi}<1/q\ ,\\
\big\{w\in B_{q,p}^{2\xi}\,;\, u\vert_{\Gamma_0}=0\big\}\ ,& 1/q<{2\xi}<1+1/q\ , 2\xi\not= 1\ ,\\
\big\{w\in B_{q,p}^{2\xi}\,;\, \mathcal{B}u=0\big\}\ ,& 1+1/q<2\xi<2\ ,
\end{array}
\right.
\eqnn
(see e.g. \cite{Triebel}).
In particular, due to \eqref{50b} we have $E_{\varsigma}\,\dot{=}\,\Bqb^{2-2/p}\hookrightarrow C^{1+\epsilon}(\bar{\Om})$ for $\varsigma=1-1/p$ and some $\epsilon>0$. So $\mathrm{int}(E_\varsigma^+)\ne\emptyset$ yielding \eqref{6}. Fix any $\kappa\in (2-2/p,2)\setminus\{1\}$ and set $F:=\Bqb^{\kappa}$ with order induced by the cone of $L_q$. Then pointwise multiplication
$$
\Bqb^{\kappa}\cdot W_{q,\mathcal{B}}^2\hookrightarrow \Bqb^{2(1-1/p)}\doteq E_\varsigma
$$
is continuous according to \eqref{50b} and \cite[Thm.4.1]{AmannMultiplication}. Thus \eqref{7} holds. Let 
$$
\E_1:=L_p(J,\Wqb^2)\cap W_p^1(J,L_q)\quad\text{and}\quad \E_0:=L_p(J,L_q)
$$
and note that 
\bqnn
U:=\int_0^{a_m}u(a)\rd a\in E_1=\Wqb^2\ ,\quad u\in\E_1\ .
\eqnn
Suppose that
\bqn\label{60}
\mu\in C^{4-,\rho}(\R\times J)\ ,\qquad \mu\ge 0\ ,\qquad \mu(0,a)>0\ \text{for}\ a\ \text{near}\ 0\ .
\eqn
Set $h(a):=\mu(0,a)$ and $g(\lambda,u)(a):=\lambda(\mu(U,a)-h(a))$ for $\lambda\in\R$, $u\in\E_1$, and $a\in J$. Then \eqref{60} and Proposition~\ref{dix} from the appendix ensure \eqref{8} and \eqref{9}.
Further suppose that
\bqn\label{61}
\bar{b}\in C^{4-,0}(\R\times J)\ ,\qquad \bar{b}\ge 0\ ,\qquad \bar{b}(0,\cdot)\not\equiv 0\ ,
\eqn
and set $b(u,a):=\bar{b}(U,a)$ for $u\in\E_1$, $a\in J$.
Then \eqref{61} and Proposition~\ref{dix} ensure \eqref{15}. 
Define
$$
A(u,a)w:=\Ac(U,a)w\ ,\quad w\in E_1\ ,\quad u\in \E_1\ .
$$
Proposition~\ref{dix}, \eqref{50g}, and \eqref{50aa} entail that the superposition operators induced by $D$, $\partial_1D$, and $d$ (again labeled $D$, $\partial_1D$, and $d$) satisfy $D,\partial_1D, d\in C^1(\Wqb^2,L_\infty(J,C^{1+\epsilon}(\Om)))$. This yields $$A\in C^1(\Wqb^2,L_\infty(J,\mathcal{L}(\Wqb^2,L_q)))\ ,$$
 whence \eqref{10}. If $u\in \E_1$ and $\lambda\ge 0$ are fixed, then $A(u,\cdot)+\lambda\mu(U,\cdot)\in C^\rho(J,\mathcal{H}(\Wqb^2,L_q))$ from which we conclude \eqref{11} and \eqref{11B} due to \cite[I.Cor.1.3.2, II.Cor.4.4.2]{LQPP} and the compactness of $J$. Noticing that $A(0,a)=-D(0,a)\Delta_x$ by \eqref{50aa} it follows from \cite[Sect.7,Thm.11.1]{AmannIsrael} that for $a\in J$ and $\lambda>0$ fixed, $-A(0,a)-\lambda h(a)$ is resolvent positive, generates a contraction semigroup of negative type on each $L_r(\Om)$, $r\in (1,\infty)$, and is self-adjoint on $L_2(\Om)$. Hence $A(0,\cdot)+\lambda h$ possesses maximal $L_p$-regularity on $J$ according to \cite[III.Ex.4.7.3,III.Thm.4.10.8]{LQPP}, whence \eqref{11C}. Moreover, since 
$$
\Pi_0(a,\sigma)=e^{\int_\sigma^aD(0,r)\rd r \Delta_x}\ ,\quad 0\le \sigma\le a\le a_m\ ,
$$
where $\{e^{a\Delta_x }; a\ge 0\}$ is the semigroup associated with $(-\Delta_x,\B)$, condition \eqref{13} follows from the maximum principle and \eqref{14} is obvious. Finally, let $\sigma_1$ be the first eigenvalue of $(-\Delta_x,\B)$ and let $\varphi_1\in \Wqb^2$ be a corresponding positive eigenfunction (e.g. see \cite{AmannIsrael}). 
Then
$$
e^{\int_0^aD(0,r)\rd r \Delta_x}\,\varphi_1\,=\,e^{-\sigma_1\int_0^a D(0,r)\rd r}\varphi_1\ ,\quad 0\le a\le a_m\ ,
$$
and thus
$$
Q_\lambda\,\varphi_1\,=\,\int_0^{a_m}b(0,a)\, e^{-\lambda\int_0^a\mu(0,r)\rd r}\, e^{\int_0^aD(0,r)\rd r \Delta_x}\,\varphi_1\,\rd a\, =\, k(\lambda)\,\varphi_1\ ,
$$
where
$$
k(\lambda)\,:=\,\int_0^{a_m}b(0,a)\, e^{-\lambda\int_0^a\mu(0,r)\rd r}\, e^{-\sigma_1\int_0^a D(0,r)\rd r}\,\rd a\ .
$$
Since the spectral radius $r(Q_\lambda)$ is the only eigenvalue with positive eigenfunction for the strongly positive compact operator $Q_\lambda\in \mathcal{K}(\Bqb^{2-2/p})$ by the Krein-Rutman theorem, we have $r(Q_\lambda)=k(\lambda)$. To satisfy \eqref{18} we assume that
 \bqn\label{333}
 k(0)\,=\, \int_0^{a_m}b(0,a)\, e^{-\sigma_1\int_0^a D(0,r)\rd r}\,\rd a>1
 \eqn
and then choose $\lambda_0>0$ such that $k(\lambda_0)=1$.\\

Summarizing what we have just shown and referring to Theorem~\ref{F} we can state:

\begin{prop}\label{G}
Suppose \eqref{50a}-\eqref{333}. Then $(\lambda_0,0)$ with $k(\lambda_0)=1$ is a bifurcation point for the problem
\begin{align*}
&\partial_a u+\Ac(U(x),a)u+\lambda\mu(U(x),a)u=0\ ,&a\in J\, ,\ x\in\Om\ ,\\
&u(0,x)=\int_0^{a_m} b\big(U(x),a)u(a)\,\rd a\ ,&  x\in\Om\ ,\\
&\B u(a,x)=0\ ,&a>0\, ,\ x\in\partial\Om\ ,\\
&U(x)=\int_0^{a_m}u(a,x)\,\rd a\ ,&  x\in\Om\ .
\end{align*}
There are $\varepsilon_0>0$ and a unique branch $\{(\lambda(\ve),u(\ve))\,;\,\vert\ve\vert<\ve_0\}$ of solutions emanating from $(\lambda_0,0)$ with $$u(\ve)\in L_p(J,\Wqb^2)\cap W_p^1(J,L_q),\quad u(\ve)\not\equiv 0\ \text{if}\ \ve\not= 0\ ,$$ of the form
\bqn\label{400}
u(\ve)=\ve\big(\,e^{-\lambda_0\int_0^a\mu(0,r)\rd r}\,e^{-\sigma_1\int_0^a D(0,r)\rd r}\varphi_1+z(\ve)\big)\ ,\quad \vert\ve\vert<\ve_0\ .
\eqn
Both $\lambda:(-\ve_0,\ve_0)\rightarrow \R^+$ and $z:(-\ve_0,\ve_0)\rightarrow L_p(J,\Wqb^2)\cap W_p^1(J,L_q)$ are continuous. Moreover, $u(\ve)(a,x)\ge 0$ for $\ve\in (0,\ve_0)$ and $(a,x)\in J\times\Om$.
\end{prop}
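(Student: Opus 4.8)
The plan is to realize Proposition~\ref{G} as a direct application of the abstract Theorem~\ref{F} to the concrete choices of spaces and operators introduced in Section~\ref{3.1}. The bulk of the work is therefore a verification that all of the abstract hypotheses \eqref{6}--\eqref{11}, \eqref{11B}, \eqref{11C}, \eqref{13}--\eqref{15}, and \eqref{18} are indeed satisfied here; once that is done, Theorem~\ref{F} delivers the bifurcation point $(\lambda_0,0)$, the local branch in $\R^+\times\E_1$ of the form \eqref{40}, and the positivity statement, and it only remains to translate \eqref{40} into the explicit form \eqref{400} using the identity $\Pi_0(a,0)\varphi_1 = e^{-\sigma_1\int_0^a D(0,r)\,\rd r}\varphi_1$ together with \eqref{11A} applied at $\lambda=\lambda_0$.

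Concretely, I would proceed as follows. First, fix $E_0:=L_q(\Om)$, $E_1:=\Wqb^2$, $p,q$ as in \eqref{50b}, and $F:=\Bqb^\kappa$ with $\kappa\in(2-2/p,2)\setminus\{1\}$; the embedding $E_1\dhr E_0$ is the Rellich--Kondrachov theorem, and $E_\varsigma\doteq\Bqb^{2-2/p}\hookrightarrow C^{1+\epsilon}(\bar\Om)$ by \eqref{50b} gives $\mathrm{int}(E_\varsigma^+)\ne\emptyset$, i.e.\ \eqref{6}; the multiplication estimate from \cite[Thm.4.1]{AmannMultiplication} gives $F\cdot E_1\hookrightarrow E_\varsigma$, and $F^+\cdot E_0^+\hookrightarrow E_0^+$ is clear since the order on $F$ is induced by that of $L_q$, hence \eqref{7}. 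Second, set $h(a):=\mu(0,a)$, $g(\lambda,u)(a):=\lambda(\mu(U,a)-h(a))$ with $U=\int_0^{a_m}u(a)\,\rd a\in E_1$, and $b(u,a):=\bar b(U,a)$; then Proposition~\ref{dix} from the appendix, applied to the superposition operators generated by $\mu$ and $\bar b$, yields the $C^1$-regularity in \eqref{8} and \eqref{15}, while \eqref{60} gives $g(\lambda,0)\equiv0$ and the positivity/nondegeneracy in \eqref{9} and \eqref{15}. Third, for the operator $A(u,a)w:=\Ac(U,a)w$, Proposition~\ref{dix} together with \eqref{50g} and \eqref{50aa} shows $D,\partial_1 D,d\in C^1(\Wqb^2,L_\infty(J,C^{1+\epsilon}(\Om)))$, which gives $A\in C^1(\Wqb^2,L_\infty(J,\mathcal{L}(\Wqb^2,L_q)))$, i.e.\ \eqref{10}; freezing $(u,\lambda)$ and using that $\Ac(U,\cdot)+\lambda\mu(U,\cdot)$ lands in $C^\rho(J,\mathcal{H}(\Wqb^2,L_q))$ by \cite[I.Cor.1.3.2, II.Cor.4.4.2]{LQPP} (compactness of $J$ is used here) yields \eqref{11} and \eqref{11B}.

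Fourth, I single out the linear piece: since $d(0,\cdot)\equiv0$ by \eqref{50aa}, we have $A(0,a)=-D(0,a)\Delta_x$; invoking \cite[Sect.7,Thm.11.1]{AmannIsrael} this operator (plus $\lambda h(a)\ge 0$) is resolvent-positive, generates a contraction semigroup of negative type on every $L_r$, and is self-adjoint on $L_2$, so \cite[III.Ex.4.7.3, III.Thm.4.10.8]{LQPP} give maximal $L_p$-regularity on $J$, i.e.\ \eqref{11C}. The evolution operator is $\Pi_0(a,\sigma)=e^{\int_\sigma^a D(0,r)\,\rd r\,\Delta_x}$, so \eqref{13} follows from the strong maximum principle and \eqref{14} is immediate since all $A(0,a)$ are scalar multiples of $-\Delta_x$. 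Finally, testing $Q_\lambda$ against the first eigenfunction $\varphi_1$ of $(-\Delta_x,\B)$ gives $Q_\lambda\varphi_1=k(\lambda)\varphi_1$, and since by Lemma~\ref{H} the spectral radius is the only eigenvalue with a positive eigenfunction, $r(Q_\lambda)=k(\lambda)$; assumption \eqref{333} is exactly $r(Q_0)>1$, i.e.\ \eqref{18}, and $\lambda_0$ is chosen with $k(\lambda_0)=1$ (its existence and uniqueness also follow from Lemma~\ref{C}). Theorem~\ref{F} now applies verbatim, and substituting $B_{\lambda_0}=\varphi_1$ and $\Pi_{(\lambda_0,0)}(a,0)\varphi_1 = e^{-\lambda_0\int_0^a\mu(0,r)\,\rd r}e^{-\sigma_1\int_0^a D(0,r)\,\rd r}\varphi_1$ into \eqref{40} produces \eqref{400}, with the nonnegativity $u(\ve)(a,x)\ge0$ for $\ve\in(0,\ve_0)$ coming from $u(\ve)\in\E_1^+$ in Theorem~\ref{F}. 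The main obstacle is purely bookkeeping: the careful checking of \eqref{8}, \eqref{10}, and \eqref{15} depends on the differentiability of superposition operators in Besov/Sobolev--Slobodeckii spaces, which is precisely why Proposition~\ref{dix} of the appendix is needed; everything else is a routine invocation of the quoted results of Amann and of \cite{LQPP}.
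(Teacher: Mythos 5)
Your proposal is correct and follows essentially the same route as the paper: Proposition~\ref{G} is obtained there precisely by verifying hypotheses \eqref{6}--\eqref{11}, \eqref{11B}, \eqref{11C}, \eqref{13}--\eqref{15}, \eqref{18} for the concrete choices $E_0=L_q$, $E_1=\Wqb^2$, $F=\Bqb^\kappa$ (using Proposition~\ref{dix} for the superposition operators and the cited results of Amann for generation, maximal regularity, and positivity), identifying $r(Q_\lambda)=k(\lambda)$ via the eigenfunction $\varphi_1$, and then invoking Theorem~\ref{F}. Your translation of \eqref{40} into \eqref{400} via $B_{\lambda_0}=\varphi_1$ and the explicit evolution operator is exactly the paper's final step.
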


\begin{rem}
A local dependence of the data on $u$ with respect to age is also possible. For example, one may apply Theorem~\ref{F} for diffusion terms of the form $\divv_x(D(u(a,x))\nabla_ u)$ as well (see \cite[Ex.3.1]{WalkerSIMA} for details). Moreover, the functions $D$ and $d$ in \eqref{50a} may also depend on $x\in\Om$ provided the dependence is sufficiently smooth. In this case one needs Remark~\ref{C1} d) to verify \eqref{10}.

\end{rem}

\subsection{Proof of Theorem \ref{P}}

Applying Proposition \ref{G} to the problem
\begin{align}
&\partial_a u=\divv_x\big(D(U(x),a)\nabla_xu\big)-\lambda\mu(U(x),a)u\ ,& a\in J\, ,\ x\in\Om\ ,\label{2aa}\\
&u(0,x)=\int_0^{a_m} b\big(U(x),a)u(a)\,\rd a\ ,&  x\in\Om\ ,\label{2bb}\\
&\delta u(a,x)+(1-\delta)\partial_\nu u(a,x)=0\ ,& a\in J\, ,\ x\in\partial\Om\ .\label{2cc}
\end{align}
considered in the introduction, we obtain under the assumptions of Theorem~\ref{P} a branch of nontrivial solutions 
$$
\{(\lambda(\ve),u(\ve))\,;\,\vert\ve\vert<\ve_0\}\quad\text{in}\quad \R^+\times \big( C(J,C(\bar{\Om}))\cap C^{1}(\dot{J},C(\bar{\Om}))\cap C(\dot{J},C^2(\Om))\big)\ ,
$$ 
where the regularity of $u(\ve)$ is due to standard parabolic regularity theory (e.g. see \cite[Thm.9.2]{AmannReactDiffII}), where $\dot{J}:=J\setminus\{0\}$.

Let now $\delta=0$ in \eqref{2cc} and assume \eqref{4}. Then $\sigma_1=0$ and, for any nonnegative solution $(\lambda,u)$ to \eqref{2aa}-\eqref{2cc}, we have
$$
\dfrac{\rd}{\rd a}\int_\Om u(a,x)\,\rd x=-\lambda\int_\Om \mu(U(x),a) u(a,x)\,\rd x\le -\lambda\mu(0,a)\int_\Om u(a,x)\,\rd x\ ,
$$
whence
$$
z(a)\le z(0)e^{-\lambda\int_0^a \mu(0,r)\rd r}\quad\text{for}\quad z(a):=\int_\Om u(a,x)\,\rd x\ .
$$
Moreover, by \eqref{2bb} and \eqref{4},
$$ z(0)=\int_\Om\int_0^{a_m}b(U(x),a) u(a,x)\,\rd a\rd x\le \int_0^{a_m}b(0,a) z(a)\,\rd a\le z(0)\int_0^{a_m}b(0,a)e^{-\lambda\int_0^a \mu(0,r)\rd r}\,\rd a\ ,
$$
and thus
$$
k(\lambda)=\int_0^{a_m}b(0,a)e^{-\lambda\int_0^a \mu(0,r)\rd r}\,\rd a\ge 1\ .
$$
Since $k(\lambda)$ is strictly decreasing in $\lambda$ and $k(\lambda_0)=1$, we conclude $\lambda\le\lambda_0$, that is, subcritical bifurcation occurs in \eqref{2aa}-\eqref{2cc} in this case. This proves Theorem~\ref{P}.

\subsection{Subcritical bifurcation for Dirichlet boundary conditions}

We consider an example involving Di\-richlet boundary conditions. More precisely, let us consider
\begin{align}
&\partial_a u=D(U)\Delta_xu-\lambda\mu(U(x),a)u\ ,& a\in J\, ,\ x\in\Om\ ,\label{500}\\
&u(0,x)=\int_0^{a_m} b\big(U(x),a)u(a)\,\rd a\ ,&  x\in\Om\ ,\label{510}\\
&u(a,x)=0\ ,& a\in J\, ,\ x\in\partial\Om\ .\label{520}
\end{align}
Note that the diffusion coefficients are independent of $a\in J$. Suppose that $D\in C^1(\Wqb^2(\Om),(0,\infty))$. If \eqref{4} still holds, then we easily derive for any positive solution $(\lambda,u)$ of \eqref{500}-\eqref{520} analogously as above that
$$
z'(a)+\sigma_1 D(U)z(a)\le -\lambda \mu(0,a) z(a)\quad\text{for}\quad z(a):=\int_\Om \varphi_1(x) u(a,x)\,\rd x\ ,
$$
where $\varphi_1$ is a positive eigenfunction to the principal eigenvalue $\sigma_1>0$ of $-\Delta_x$ subject to Dirichlet boundary conditions on $\partial\Om$. Thus, if in addition $D(U)\ge D(0)$ for $0\le U\in\Wqb^2(\Om)$, then we deduce again that
$$
z(0)\le \int_0^{a_m}b(0,a)\,e^{-\sigma_1 D(0)a}\, e^{-\lambda\int_0^a\mu(0,r)\rd r}\, \rd a\, z(0)\,=\,k(\lambda)\, z(0)\ .
$$
Hence $\lambda\le\lambda_0$ and thus subcritical bifurcation occurs also in this case.

Observe that one may replace the diffusion term $D(U)\Delta_xu$ in \eqref{500} by $\divv_x(D(U(x))\nabla_x u)$ depending locally with respect to $x$ on $U$ and derive the same conclusion of subcritical bifurcation provided that \mbox{$\sigma_1(U)\ge \sigma_1(0)$} for $0\le U\in \Wqb^2(\Om)$, where $\sigma_1(U)$ is the first eigenvalue of $w\mapsto -\divv_x(D(U(x))\nabla_x w)$ and using a corresponding positive eigenfunction $\varphi_1=\varphi_1(U)$ in the definition of $z$.

\subsection{An example with Holling-Tanner type nonlinearities}

As noted in Remark~\ref{R1} one can also allow for $h<0$ in \eqref{5a}. We conclude with an example, which has been investigated in \cite{DelgadoEtAl2} in the case of linear diffusion:
\begin{align}
&\partial_a u+\Ac(U(x),a)u+\mu(U(x),a)u=\lambda u\pm\dfrac{u}{1+u}\ ,& a\in J\, ,\ x\in\Om\ ,\label{60a}\\
&u(0,x)=\int_0^{a_m} b\big(U(x),a)u(a)\,\rd a\ ,&  x\in\Om\ ,\label{60b}\\
&\B u(a,x)=0\ ,& a>0\, ,\ x\in\partial\Om\ ,\label{60c}\\
&U(x)=\int_0^{a_m}u(a,x)\,\rd a\ ,&  x\in\Om\ ,\label{60d}
\end{align}
with $\Ac$ and $\B$ as in Subsection~\ref{3.1}. We impose the same assumptions \eqref{50a}-\eqref{61} as there, where we take $q=p$ for simplicity. The strict positivity of $\mu(0,a)$ in \eqref{60} is not needed here. We also use the same spaces as in Subsection~\ref{3.1}:
\begin{align*}
&E_1:=\Wpb^2:=\Wpb^2(\Om):=\big\{u\in W_p^2\,;\, \mathcal{B}u=0\big\}\dhr L_p=:E_0 \ ,\quad E_\varsigma\doteq \Wpb^{2-2/p} \ ,\\
&\E_1:=L_p(J,\Wpb^2)\cap W_p^1(J,L_p)\quad\text{and}\quad \E_0:=L_p(J,L_p)\ .
\end{align*}
Noticing that the ``Holling-Tanner-type'' nonlinearity can be written in the form
$$
\dfrac{u}{1+u}=u-\dfrac{u^2}{1+u}\ ,
$$
problem \eqref{60a}-\eqref{60d} fits in the abstract form \eqref{5a}, \eqref{5b} by setting
$$ 
A(u,a):=\Ac(U,a)+\mu(U,a)\mp 1\in\mathcal{L}(\Wpb^2,L_p)\ ,\quad h(a):=-1\ ,\quad g(u):=\mp\frac{u}{1+u}\ .
$$
Let $V:=(-1,1)$. Then \eqref{50b} (with $p=q$), Remark~\ref{C1} b) from the appendix, and \cite[VII.Thm.6.4]{AmannEscherII} ensure that the superposition operator of $g$ (still denoted by $g$) belongs to $C^1(C(J,V_p),C(J,\Wpb^{2-2/p}))$, where we define \mbox{$V_p:=\Wpb^{2-2/p}\cap C(\bar{\Om},V)$}. Also note that $g(0)=0$. Recalling that $\E_1\hookrightarrow C(J,E_\varsigma)$, this implies $g\in C^1(\Sigma, C(J,F))$ for $\Sigma:=\mathbb{B}_{\E_1}(0,R)$ with $R>0$ sufficiently small and $F:=E_\varsigma$. To satisfy $r(Q_0)<1$ (see Remark~\ref{R1}), we assume that
\bqn\label{100}
k(0)<1\ 
\eqn
for
$$
k(\lambda):=\int_0^{a_m}b(0,a)\, e^{(\lambda\mp 1)a}\, e^{-\int_0^a\mu(0,r)\rd r}\,e^{-\sigma_1\int_0^a D(0,r)\rd r}\, \rd a\ ,
$$
where $\sigma_1$ is the first eigenvalue of $(-\Delta_x,\B)$. Let $\varphi_1\in \Wpb^2$ be a corresponding positive eigenfunction. As in Subsection~\ref{3.1} we have $Q_\lambda\varphi=k(\lambda)\varphi_1$ with
$$
Q_\lambda:=\int_0^{a_m}b(0,a)\, e^{(\lambda\mp 1)a}\, e^{-\int_0^a\mu(0,r)\rd r}\,e^{\int_0^a D(0,r)\rd r\Delta_x}\, \rd a\ ,
$$
whence $r(Q_\lambda)=k(\lambda)$, in particular, $r(Q_0)<1$ in view of \eqref{100}. Thus we may invoke Theorem~\ref{F} and Remark~\ref{R1} to conclude the existence of a branch of nontrivial solutions to \eqref{60a}-\eqref{60d} emanating from the critical point $(\lambda_0,0)$, where $\lambda_0>0$ with $k(\lambda_0)=1$.

This local bifurcation result generalizes the (global) one of \cite{DelgadoEtAl2} in that nonlinear diffusion and nonlinear death and birth rates may be considered. However, we shall point out that in \cite{DelgadoEtAl2} a death rate depending on local position is considered (what can be considered in the present situation as well but requires some additional effort).

\section{Appendix}\label{appendix}

We prove a result on the differentiability of superposition operators in Sobolev-Slobodeckii spaces that is used in the previous examples but might be of interest in other applications as well. The proof is similar to \cite[Lem.2.7]{WalkerEJAM} or \cite[Prop.15.4]{AmannAnnali88}, where continuity properties are derived.

To set the stage let $\Om$ be an open and bounded subset of $\R^n$, let $V$ be an open neighborhood of 0 in $\R^k$, and let $I$ be a compact interval in $\R$. Given a function
$f:I\times V\rightarrow \R$ define the {\it superposition operator $F$ of $f$} by
$$F[u](a)(x):=f(a,u(x))\quad\text{for}\ u:\Om\rightarrow V\ \text{and}\ a\in I\ ,\ x\in \Om\ .
$$
We write $f\in C^{0,k-}(I\times V)$ provided that $\partial_2^{k-1}f\in C(I\times V)$ is Lipschitz continuous in $x\in V$ uniformly with respect to $a\in I$.

Recall the definition of the norm in $W_q^\xi(\Om,\R^k)$ (e.g. see \cite{Triebel}): if $q\in (1,\infty)$ and $\xi\in (0,1)$, then
$$
\|u\|_{W_q^\xi(\Om,\R^k)}^q=\|u\|_{L_q(\Om,\R^k)}^q +\int_{\Om\times\Om}\dfrac{\vert u(x)-u(y)\vert^q}{\vert x-y\vert^{n+\xi q}}\,\rd (x,y)
$$
and if $\xi\in (1,2)$, then
$$
\|u\|_{W_q^\xi(\Om,\R^k)}^q=\|u\|_{L_q(\Om,\R^k)}^q+\sum_{j=1}^n\|\partial_ju\|_{W_q^{\xi-1}(\Om,\R^k)}^q\ .
$$

\begin{prop}\label{dix}
Let $q\in (n,\infty)$, $\xi\in(n/q,2)$, and
$\eta\in(0,\xi)$. Further, let $V_\xi:=W_q^\xi(\Om,\R^k)\cap C(\bar{\Om},V)$ be equipped with the $W_q^\xi$-topology. Then $F\in
C^{2-}\big(V_\xi, L_\infty(I,W_q^{\eta}(\Om))\big)$ provided that $f\in C^{0,4-}(I\times V)$. The Fr\'{e}chet derivative $DF[u]$ at $u\in V_\xi$ is given by $$\big(DF[u]h\big)(a)(x)=\partial_2f(a,u(x))h(x)\ ,\quad a\in I\ ,\ x\in\Om\ ,\   h\in W_q^\xi(\Om,\R^k)\ .$$
\end{prop}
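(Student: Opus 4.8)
The plan is to establish the claimed differentiability of the superposition operator $F$ by the standard bootstrap-in-$k$ strategy: first understand $F$ acting on $W_q^\xi$ with values in $W_q^\eta$ as a \emph{continuous} map (i.e. the $C^{0,k-}$ part of the statement with $k=1$), then differentiate once and reduce the $C^1$-property to continuity of the superposition operator of $\partial_2 f$ acting into $\mathcal{L}(W_q^\xi(\Om,\R^k),W_q^\eta(\Om))$, which is itself handled by the same continuity argument applied to a product; finally, the Lipschitz continuity of $DF$ (the ``$C^{2-}$'' rather than merely $C^1$) comes from one more application of the same estimate to $\partial_2^2 f$, which is why one needs $f\in C^{0,4-}$ rather than $C^{0,2-}$. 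The algebraic backbone throughout is the embedding $W_q^\xi(\Om,\R^k)\hookrightarrow C(\bar\Om,\R^k)$, valid since $q>n$ and $\xi>n/q$, so that $u$ ranges in a compact subset of $V$ and all the composed functions $\partial_2^j f(\cdot,u(\cdot))$ are bounded; together with the fact that $W_q^\eta(\Om)$ with $q>n$, $\eta<\xi$ is (up to the order-zero term) a Banach algebra under pointwise multiplication with respect to functions in $W_q^\xi\cap L_\infty$. Uniformity in $a\in I$ is inherited at every step from the uniform Lipschitz hypothesis on $\partial_2^{k-1}f$ and the compactness of $I$, so all the pointwise-in-$a$ estimates upgrade automatically to $L_\infty(I,\cdot)$ estimates.

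First I would record the \textbf{key estimate}: for $v\in W_q^\xi(\Om,\R^k)\cap C(\bar\Om)$ with range in a fixed compact $K\subset V$ and a function $\phi\in C^{0,1-}(I\times V)$ (Lipschitz in the second variable uniformly in $a$), the composition $\Phi[v](a)(x)=\phi(a,v(x))$ satisfies $\|\Phi[v]\|_{L_\infty(I,W_q^\eta(\Om))}\le C(K,\phi)\big(1+\|v\|_{W_q^\xi(\Om,\R^k)}\big)$, and more importantly the Lipschitz-type bound $\|\Phi[v_1]-\Phi[v_2]\|_{L_\infty(I,W_q^\eta(\Om))}\le C\,\|v_1-v_2\|_{W_q^\xi(\Om,\R^k)}$ on bounded sets, when $\eta<\xi$. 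For $\eta\in(0,1)$ this is a direct Gagliardo-seminorm computation: writing the difference quotient $\frac{|\phi(a,v_1(x))-\phi(a,v_1(y))-\phi(a,v_2(x))+\phi(a,v_2(y))|}{|x-y|^{n/q+\eta}}$, one splits it using the Lipschitz bound on $\partial_2\phi$ and interpolates between the $C^0$-bound on $v_1-v_2$ (controlled by $\|v_1-v_2\|_{W_q^\xi}$) and the $W_q^\xi$-seminorm, gaining the strict inequality $\eta<\xi$ as the needed room; for $\eta\in(1,2)$ one differentiates once in $x$, producing a term $\partial_2\phi(a,v(x))\partial_j v(x)$, and reduces to the $\eta-1\in(0,1)$ case for the product, using that $\partial_2\phi(\cdot,v(\cdot))\in W_q^{\xi-1}$ by the $\eta<\xi$ argument again and that $W_q^{\xi-1}\cap L_\infty$ multiplies $W_q^{\xi-1}$ into $W_q^{\eta-1}$.

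Granting this, I would \textbf{prove $C^1$} as follows. Define the candidate derivative $DF[u]h(a)(x)=\partial_2 f(a,u(x))h(x)$; since $f\in C^{0,4-}$ we have $\partial_2 f\in C^{0,3-}$, so by the key estimate the map $u\mapsto[\,(a,x)\mapsto\partial_2 f(a,u(x))\,]$ is continuous (indeed Lipschitz on bounded sets) from $V_\xi$ into $L_\infty(I,W_q^{\eta'}(\Om))$ for any $\eta'\in(\eta,\xi)$, hence $DF[u]\in\mathcal{L}(W_q^\xi(\Om,\R^k),L_\infty(I,W_q^\eta(\Om)))$ by the multiplication $W_q^{\eta'}\cdot W_q^\xi\hookrightarrow W_q^\eta$. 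The remainder estimate $F[u+h]-F[u]-DF[u]h$ is handled pointwise by Taylor's theorem in the second variable, $f(a,u(x)+h(x))-f(a,u(x))-\partial_2 f(a,u(x))h(x)=\big(\int_0^1(\partial_2 f(a,u(x)+t h(x))-\partial_2 f(a,u(x)))\,\rd t\big)h(x)$, and the integrand is estimated in $W_q^\eta$ by the Lipschitz-on-bounded-sets property of the superposition operator of $\partial_2 f$ (applied to the pair $u+th$, $u$), giving $o(\|h\|_{W_q^\xi})$ after multiplying by $h$. Continuity of $u\mapsto DF[u]$ in $\mathcal{L}$ is exactly the continuity statement for the superposition operator of $\partial_2 f$ just used, and the stronger Lipschitz continuity (the ``$C^{2-}$'') follows because $\partial_2 f\in C^{0,3-}$ means its superposition operator is itself $C^{1-}$-regular, i.e. Lipschitz, by one further application of the key estimate to $\partial_2^2 f\in C^{0,2-}$.

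The \textbf{main obstacle}, and the only place where real care is needed, is the fractional-order bookkeeping in the key estimate when $\eta>1$ and the corresponding need to keep a strict gap $\eta<\eta'<\xi$ at each stage so that the Gagliardo seminorm of the composition is genuinely finite (composition with a Lipschitz function does not in general preserve $W_q^\xi$ — one must lose a bit of smoothness, which is exactly what the hypothesis $\eta<\xi$ buys); relatedly, one must verify that all constants depend on $u$ only through $\|u\|_{C(\bar\Om)}$ and $\|u\|_{W_q^\xi}$, so that they are uniform on bounded subsets of $V_\xi$ whose image stays in a fixed compact subset of $V$ — this uses that $V_\xi$ carries the $W_q^\xi$-topology and that $W_q^\xi\hookrightarrow C(\bar\Om)$ is continuous, so a $W_q^\xi$-bounded set consisting of functions with values in $V$ has image in a compact $K\Subset V$ once we localize. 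Everything else is routine: the passage from pointwise-in-$a$ estimates to $L_\infty(I,\cdot)$ is immediate from compactness of $I$ and the stated uniform-in-$a$ Lipschitz hypotheses, and the Taylor remainder argument is standard once the superposition operator of $\partial_2 f$ is known to be Lipschitz on bounded sets.
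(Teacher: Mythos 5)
Your proposal is correct and follows essentially the same route as the paper's proof: the embedding $W_q^\xi(\Om)\hookrightarrow C(\bar{\Om})$ to confine the range of $u$ to a compact subset of $V$, a second--difference/mean--value estimate in the Gagliardo seminorm for $\xi<1$, and a reduction via the chain rule and Amann's multiplication theorem to the lower-order case for $\xi\in(1,2)$, with the extra derivative of $f$ paying for the Lipschitz continuity of $DF$. The only cosmetic flaws are that your ``key estimate'' is announced for $\phi\in C^{0,1-}$ but proved and used with $\partial_2\phi$ Lipschitz (i.e.\ $\phi\in C^{0,2-}$), and that the loss $\eta<\xi$ is not actually forced when $\xi<1$ (composition with Lipschitz functions does preserve $W_q^\xi$ for $\xi\in(0,1)$, cf.\ part (i) of the paper's proof and Remark~\ref{C1}(a)); neither affects the validity of the argument.
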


\begin{proof}
We may assume $k=1$.\\

\noindent (i) First, let $\xi\in(n/q,1)$ and note that $W_q^\xi(\Om)\hookrightarrow
C(\bar{\Om})$. Fix $u\in V_\xi$ and choose an open neighborhood $R$ of $u(\bar{\Om})$ in $V$ such that its closure $\bar{R}$ is compact and contained in $V$. Then, since $f\in C^{0,3-}(I\times V)$, there is $c_0(R)>0$ with 
\begin{align*}
&\vert \partial_2^2f(a,r)\vert\le c_0(R)\ ,\quad r\in\bar{R}\ ,\quad a\in I\ ,\\
& \vert \partial_2f(a,r)-\partial_2f(a,s)\vert+ \vert \partial_2^2f(a,r)-\partial_2^2f(a,s)\vert\,\le\, c_0(R)\, \vert
r-s\vert\ ,\quad r,s\in\bar{R}\ ,\quad a\in I\ .
\end{align*}
In the following we suppress the (fixed) variable $a\in I$ in $f$ and its derivatives for the sake of readability and we set $f':=\partial_2 f$. Let $h\in W_q^\xi(\Om)$ with $\|h\|_{W_q^\xi(\Om)}$ sufficiently small so that $u(\bar{\Om})+h(\bar{\Om})\subset\bar{R}$. Then $$(F'[u]h)(a)(x):=\partial_2f(a,u(x))h(x)=f'(u(x))h(x)$$ by convention. The mean value theorem implies for $x,y\in \Om$:
 \begin{equation*}
    \begin{split}
    \big \vert & F[u+h](a)(x)-F[u](a)(x)-(F'[u]h)(a)(x)-\big[F[u+h](a)(y)-F[u](a)(y)-(F'[u]h)(a)(y) \big]\big\vert\\
    &\le\, \left\vert \int_0^1 \big[f'\big(u(x)+\tau h(x))-f'(u(x))\big]\,\rd \tau\,\big(h(x)-h(y)\big)\right\vert\\
    &\quad+\left\vert \int_0^1\int_0^1 f''\big(u(x)+\sigma\tau h(x))\,\rd \sigma\, h(x)\,\rd \tau\, h(y)
     - \int_0^1\int_0^1 f''\big(u(y)+\sigma\tau h(y)\big)\,\rd \sigma\, h(y)\,\rd \tau\, h(y)\right\vert\\
    &\le \, c_0(R)\,\| h\|_\infty\, \big\vert h(x)-h(y)\big\vert + \left\vert \int_0^1\int_0^1 f''\big(u(x)+\sigma\tau h(x)\big)\,\rd\sigma\rd\tau\,\big(h(x)-h(y)\big)\, h(y)\right\vert\\
    &\qquad +\left\vert \int_0^1\int_0^1 \big[f''\big(u(x)+\sigma\tau h(x)\big)\,-\,f''\big(u(y)+\sigma\tau h(y)\big)\big]\,\rd \sigma\rd\tau\, h(y)\, h(y)\right\vert\\
    &\le\, c(R)\,\| h\|_\infty\, \big\vert h(x)-h(y)\big\vert\,+ \,c_0(R)\,\| h\|_\infty^2\, \big[\vert u(x)-u(y)\vert+\vert h(x)-h(y)\vert\big]\ .
    \end{split}
    \end{equation*}
Therefore, we obtain by definition of the norm in the Sobolev-Slobodeckii space $W_q^\xi(\Om)$ that
    \begin{equation*}
    \begin{split}
    \big\|F[u+h]&(a)-F[u](a)-(F'[u]h)(a)\|_{W_q^\xi(\Om)}^q \\
     &\le \, c(R)\,\| h\|_\infty^{2q} \,+\, c(R)\,\| h\|_\infty^q\,\| h\|_{W_q^\xi(\Om)}^q\,+\,c(R)\, \| h\|_\infty^{2q}\,\left\{\| u\|_{W_q^\xi(\Om)}^q \,+\, \|h\|_{W_q^\xi(\Om)}^q\right\}\ .
    \end{split}
    \end{equation*}
Recalling the embedding $W_q^\xi(\Om)\hookrightarrow C(\bar{\Om})$ we thus deduce that    
$$
\big\|F[u+h]-F[u]-F'[u]h\|_{L_\infty(I,W_q^\xi(\Om))}\, =\, o\big(\|h \|_{W_q^\xi(\Om)}\big)\ ,\quad (h\rightarrow 0)\ ,
$$    
whence $F:V_\xi\rightarrow L_\infty(I,W_q^\xi(\Om)\big)$ is Fr\'{e}chet differentiable at $u\in V_\xi$ with derivative $DF[u]h=F'[u]h$ for $h\in W_q^\xi(\Om)$. Moreover, since pointwise multiplication $W_q^\xi(\Om)\times W_q^\xi(\Om)\rightarrow W_q^\xi(\Om)$ is continuous due to \cite[Thm.4.1]{AmannMultiplication} and $\xi>n/q$, we have for $v\in W_q^\xi(\Om)$ with sufficiently small norm that (writing here and in the following e.g. $f'(u)$ for the superposition operator at $u$ induced by $f'$)
\bqnn
\begin{split}
\|DF[u+v]-DF[u]&\|_{\mathcal{L}(W_q^\xi(\Om),L_\infty(I,W_q^{\xi}(\Om)))}\, =\,\sup_{h\in W_q^\xi(\Om)}\,\dfrac{\|f'(u+v)h-f'(u)h\|_{L_\infty(I,W_q^{\xi}(\Om))}}{\|h\|_{W_q^\xi(\Om)}}\\
&\le c\, \|f'(u+v)-f'(u)\|_{L_\infty(I,W_q^\xi(\Om))}\le\, c(R)\,\|v\|_{W_q^\xi(\Om)}\ ,
\end{split}
\eqnn
where the last inequality can be shown similarly as above (or also follows from \cite[Lem.2.7]{WalkerEJAM}). This implies $F\in C^{2-}\big(V_\xi, L_\infty(I,W_q^{\xi}(\Om))\big)$.\\

\noindent (ii) Now let $\xi\in (1,2)$ and $\eta\in
(1,\xi)$. Choose $\tau\in (n/q ,1)$ with
$\tau>\eta-1$. Then pointwise multiplication $W_q^\tau(\Om)
\times W_q^{\xi-1}(\Om)\rightarrow W_q^{\eta-1}(\Om)$ is
continuous, see \cite[Thm.4.1]{AmannMultiplication}. Therefore, taking $\xi=\tau$ in (i) and using the chain rule we obtain for $u\in V_\xi$ and $h\in W_q^\xi(\Om)$ with $\|h\|_{W_q^\xi(\Om)}$ sufficiently small that
     \begin{equation*}
    \begin{split}
    \big\|F[u+h]-F[&u]-F'[u]h\|_{L_\infty(I,W_q^{\eta}(\Om))}\\
    &\le\, \|F[u+h]-F[u]-F'[u]h\|_{L_\infty(I,L_q(\Om))}\\
    &\quad+\,\sum_{j=1}^n \big\|\big(f'(u+h)-f'(u)-f''(u)h\big)\, \partial_j u\big\|_{W_q^{\eta-1}(\Om)} \\
    &\quad  + \sum_{j=1}^n \big\|\big(f'(u+h)-f'(u)\big)\,\partial_j h \big\|_{W_q^{\eta-1}(\Om)} \\
      &\le \,  o\big(\|h \|_{W_q^\xi(\Om)}\big)\, +\, c\, \big\|f'(u+h)-f'(u)-f''(u)h\big\|_{W_q^{\tau}(\Om)} \, \| 					u\big\|_{W_q^{\xi}(\Om)} \\
      &\quad +\,c\,\| f'(u+h)-f'(u)\|_{W_q^{\tau}(\Om)}\,\|h\|_{W_q^{\xi}(\Om)}\\
      &\le\, o\big(\|h \|_{W_q^\xi(\Om)}\big)\, +\, o\big(\|h \|_{W_q^\tau(\Om)}\big)\,\| u\big\|_{W_q^{\xi}(\Om)}\, +\, 		c\, \|h\|_{W_q^\tau(\Om)}\,\|h\|_{W_q^\xi(\Om)}\\
      &=  o\big(\|h \|_{W_q^\xi(\Om)}\big)\ ,\quad (h\rightarrow 0)
	\end{split}
    \end{equation*}
the last equality being due to the embedding $W_q^\xi(\Om)\hookrightarrow W_q^\tau(\Om)$. This shows that
$F:V_\xi\rightarrow L_\infty(I,W_q^{\eta}(\Om)\big)$ is Fr\'{e}chet differentiable at $u\in V_\xi$ with derivative $DF[u]h=F'[u]h$ for $h\in W_q^\xi(\Om)$. 

Finally, to prove the Lipschitz continuity of $DF$ choose $\bar{\xi}\in (\eta,\xi)$ and note that pointwise multiplication $W_q^\xi(\Om)\times W_q^{\bar{\xi}}(\Om)\rightarrow W_q^\eta(\Om)$ is continuous due to \cite[Thm.4.1]{AmannMultiplication}. Hence it follows for $v\in W_q^\xi(\Om)$ with sufficiently small norm that
\bqnn
\begin{split}
\|DF[u+v]-DF[u]&\|_{\mathcal{L}(W_q^\xi(\Om),L_\infty(I,W_q^{\eta}(\Om)))}\,=\, \sup_{h\in W_q^\xi(\Om)}\,\dfrac{\|f'(u+v)h-f'(u)h\|_{L_\infty(I,W_q^{\eta}(\Om))}}{\|h\|_{W_q^\xi(\Om)}}\\
&\le c\, \|f'(u+v)-f'(u)\|_{L_\infty(I,W_q^{\bar{\xi}}(\Om))}\le\, c(R)\,\|v\|_{W_q^\xi(\Om)}\ ,
\end{split}
\eqnn
where the last inequality stems from \cite[Lem.2.7]{WalkerEJAM}. So $F\in C^{2-}\big(V_\xi, L_\infty(I,W_q^{\eta}(\Om))\big)$.
The case $\xi=1$ is obvious.
\end{proof}

\begin{rems}\label{C1}
(a) If $\xi\in (n/q,1)$ and $f\in C^{0,3-}(I\times V)$, then $F\in
C^{2-}\big(V_\xi, L_\infty(I,W_q^{\xi}(\Om))\big)$.

\begin{proof}
See part (i) of the proof of Proposition~\ref{dix}.
\end{proof}

(b) If $\xi\in (1+n/q,2)$ and $f\in C^{0,4-}(I\times V)$, then $F\in
C^{2-}\big(V_\xi, L_\infty(I,W_q^{\xi}(\Om))\big)$.

\begin{proof}
This follows exactly as in part (ii) of the proof of Proposition~\ref{dix} by observing that pointwise multiplication $W_q^\tau(\Om)
\times W_q^{\xi-1}(\Om)\rightarrow W_q^{\xi-1}(\Om)$ for $\tau\in (n/q ,1)$ with
$\tau>\xi-1$ is
continuous according to \cite[Thm.4.1]{AmannMultiplication}.
\end{proof}



(c) For simplicity we refrain form taking into account an explicit dependence of $f$ on $x\in\Om$, that is, we do not consider functions $f:I\times\bar{\Om}\times V\rightarrow\R$ in Proposition~\ref{dix}. Such a dependence can be included provided that $f$ (and its derivatives) are H\"older continuous with respect to $x$, see \cite[Prop.15.4, Prop.15.6]{AmannAnnali88}.

\end{rems}

\section*{Acknowledgement}
Part of this paper was written while visiting the University of Strathclyde in Glasgow. I would like to thank for the kind hospitality and support.


\begin{thebibliography}{99}


\bibitem{AmannIsrael}
H. Amann. \textit{Dual semigroups and second order linear elliptic boundary value problems.} Israel J. Math. {\bf 45} (1983), 225-254.

\bibitem{AmannAnnali88}
H. Amann. \textit{Existence and regularity for semilinear parabolic evolution equations.} Annali Scu. norm. sup. Pisa {\bf 11} (1988), 593-676.

\bibitem{AmannReactDiffII}
H. Amann. \textit{Dynamic theory of quasilinear parabolic equations. II. Reaction-diffusion systems.} Differential Integral Equations {\bf 3} (1990), 13-75.

\bibitem{AmannMultiplication}
H. Amann. \textit{ Multiplication in Sobolev and Besov spaces},
In Nonlinear analysis. A tribute in honour of Giovanni
Prodi. 27-57, Quaderni, Scuola Norm. Sup. 1991

\bibitem{LQPP}
H. Amann. \textit{Linear and quasilinear parabolic problems,
{Volume} {I}: Abstract linear theory.} Birkh\"auser, Basel,
Boston, Berlin 1995.

\bibitem{AmannEscherII}
H. Amann, J. Escher. {\it Analysis II.} Birkh\"auser, Basel 1999.


\bibitem{ClementEtAl} 
Ph. Cl{\'e}ment, H.J.A.M. Heijmans, S. Angenent, C.J. van Duijn, B. de Pagter. {\it One-parameter semigroups.} CWI Monographs, 5. North-Holland Publishing Co., Amsterdam, 1987.

\bibitem{CrandallRabinowitz}
M.C. Crandall, P.H. Rabinowitz. {\it Bifurcation from simple eigenvalues.} J. Functional Analysis {\bf 8} (1971), 321-340. 

\bibitem{Cushing1}
J.M. Cushing. {\it Existence and stability of equilibria in age-structured
population dynamics.} J. Math. Biology {\bf 20} (1984), 259-276.

\bibitem{Cushing2}
J.M. Cushing. {\it Global branches of equilibrium solutions of the McKendrick equations for age structured population growth.} Comp. Math. Appl. {\bf 11} (1985), 175-188.

\bibitem{Cushing3}
J. Cushing. {\it Equilibria in structured populations.} J. Math. Biology {\bf 23} (1985), 15-39.



\bibitem{DanersKochMedina}
D. Daners, P. Koch-Medina. {\it Abstract Evolution Equations, Periodic Problems, and Applications.}
Pitman Res. Notes Math. Ser., {\bf 279}, Longman, Harlow 1992. 

\bibitem{DelgadoEtAl}
M. Delgado, M. Molina-Becerra, A. Su\'arez. {\it A nonlinear age-dependent model with spatial diffusion.}
J. Math. Anal. Appl. {\bf 313} (2006), 366-380. 

\bibitem{DelgadoEtAl2}
M. Delgado, M. Molina-Becerra, A. Su\'arez. {\it Nonlinear age-dependent diffusive equations: A bifurcation approach.}
J. Diff. Equations {\bf 244} (2008), 2133-2155. 

\bibitem{Pruess2}
J. Pr\"u\ss . {\it On the qualitative behaviour of populations with age-specific interactions.} Comput.
Math. Appl. {\bf 9} (1983), 327-339.

\bibitem{RBA}
P.H. Rabinowitz. {\it Some global results for nonlinear eigenvalue problems.} J. Functional Analysis {\bf 7} (1971), 487-513.

\bibitem {Triebel}
H. Triebel. \textit{Interpolation theory, function spaces,
differential operators.} Second edition. Johann Ambrosius Barth. Heidelberg, Leipzig 1995.

\bibitem{WalkerEJAM}
Ch. Walker. {\em Global existence for an age and spatially structured haptotaxis model with nonlinear age-boundary conditions.}
Europ. J. Appl. Math. {\bf 19} (2008), 113-147.

\bibitem{WalkerDCDS}
Ch. Walker. {\em Age-dependent equations with nonlinear diffusion.}
To appear in: Discrete Contin. Dyn. Syst. A.

\bibitem{WalkerSIMA}
Ch. Walker. {\em Positive equilibrium solutions for age and spatially structured population models.}
SIAM J. Math. Anal. {\bf 41} (2009), 1366-1387.

\bibitem{WalkerJDE}
Ch. Walker. {\em Global bifurcation of positive equilibria in nonlinear population models.}
To appear in: J. Diff. Eq.

\bibitem{WebbSpringer}
G.F. Webb. {\em Population models structured by age, size, and spatial position.} In: P. Magal, S. Ruan (eds.) {\em Structured population models in biology and epidemiology.} Lecture Notes in Mathematics, Vol. 1936. Springer, Berlin, 2008.

\end{thebibliography}
\end{document}